\newtheorem{theorem}{Theorem}[section]
\newtheorem{lem}[theorem]{Lemma}
\newtheorem{thm}[theorem]{Theorem}
\newtheorem{rem}[theorem]{Remark}
\newtheorem{prop}[theorem]{Proposition}
\newtheorem{cor}[theorem]{Corollary}
\newtheorem{ex}[theorem]{Example}
\newtheorem{defn}[theorem]{Definition}
\DeclareMathAlphabet{\eusm}{OT1}{eusm}{m}{n}
\def\z*{\mbox{Z$^*$}}
\def\s*{\mbox{(S$^*$)}}
\newenvironment{proof}{\par\noindent{\itshape{Proof}.\,}}{$\hfill \Box$\par\bigskip}
\begin{document}
\title{{\bf Rings over which every module has a flat $\delta$-cover}}
\author{P\i ýnar AYDO\u GDU \\\\
Department of Mathematics,
Hacettepe University\\
 06800 Beytepe Ankara, Turkey\\
paydogdu@hacettepe.edu.tr}

\date{}
\maketitle

\begin{abstract} Let $M$ be a module. A {\em $\delta$-cover} of $M$ is an epimorphism
from a module $F$ onto $M$ with a $\delta$-small kernel. A
$\delta$-cover is said to be a {\em flat $\delta$-cover} in case
$F$ is a flat module. In the present paper, we investigate some
properties of (flat) $\delta$-covers and flat modules having a
projective $\delta$-cover. Moreover, we study rings over which
every module has a flat $\delta$-cover and call them {\em right
generalized $\delta$-perfect} rings. We also give some
characterizations of $\delta$-semiperfect and $\delta$-perfect
rings in terms of locally (finitely, quasi-, direct-) projective
$\delta$-covers and flat $\delta$-covers.
\end{abstract}
{\bf Key Words:} $\delta$-covers; $\delta$-perfect rings;
$\delta$-semiperfect rings, Flat modules. \\
{\bf 2000 Mathematics Subject Classification:} 16D40; 16L30.

\section{Preliminaries and Notation}

Let $R$ be a ring and $\mathcal{F}$ be a class of $R$-modules. Due to Enochs and Jenda \cite{EJ}, for an
$R$-module $M$, a morphism $\varphi:C\rightarrow M$, where $C\in \mathcal{F}$, is called an $\mathcal{F}$-cover
of $M$ if the following properties are satisfied:

1) For any morphism $\psi:C'\rightarrow M$, where $C'\in \mathcal{F}$, there is a morphism
$\lambda:C'\rightarrow C$ such that $\varphi o \lambda=\psi$, and

2) if $\mu$ is an endomorphism of $C$ such that $\varphi o \mu=\varphi$, then $\mu$ is an
automorphism of $C$.

If $\mathcal{F}$ is the class of projective modules, then an $\mathcal{F}$-cover is called a
{\em projective cover}. This definition is in agreement with the usual definition of a
projective cover. If $\mathcal{F}$ is the class of flat modules, then an $\mathcal{F}$-cover is
called a {\em flat cover}. On the other hand, some authors deal with flat covers in the following sense:

Let $M$ be an $R$-module. A {\em flat cover} of $M$ is an epimorphism $f:F\rightarrow M$ with a small kernel,
where $F$ is a flat module.

In this paper, we will consider the second definition. In fact, the notion of a flat cover in this sense is a
natural generalization of a projective cover. But these two notions of flat covers do not coincide.
There are examples of modules which do not have flat covers (see \cite{AAES}) whereas all modules
have flat covers in Enochs' sense (see \cite{BE}).

 Amini, Amini, Ershad and Sharif investigate in \cite{AAES} those rings $R$ whose
right $R$-modules have flat covers, and call
them {\em right generalized perfect} ({\em right $G$-perfect}, for
short) rings.

It is well-known that projective covers play an important role in
characterizing perfect and semiperfect rings. Some authors have
also characterized these rings in terms of flat covers. Ding and
Chen show in \cite{CD} that a ring $R$ is right perfect if and
only if $R$ is semilocal and every semisimple right $R$-module has
a flat cover. In \cite{Lomp}, Lomp prove that a ring $R$ is
semiperfect if and only if $R$ is semilocal and every simple right
$R$-module has a flat cover.

Recall from \cite{zhou} that an epimorphism $f:P\rightarrow M$ with a $\delta$-small kernel is called
a {\em projective $\delta$-cover} of the module $M$ in case $P$ is projective.
As a proper generalization of perfect (resp., semiperfect) rings, $\delta$-perfect
(resp., $\delta$-semiperfect) rings are defined in \cite{zhou} as follows:
 A ring $R$ is said to be {\em $\delta$-perfect}
(resp., {\em $\delta$-semiperfect}) if every $R$-module (resp.,
simple $R$-module) has a projective $\delta$-cover.

These results motivated us to define the notion of flat
$\delta$-covers. In this paper, we deal with rings over which
(certain) right modules have flat $\delta$-covers. Firstly, in
Section 2, we investigate some basic properties of
$\delta$-covers. We prove that if a module has a flat
$\delta$-cover, then a generalized projective $\delta$-cover of
the module is a projective $\delta$-cover. It is a well-known fact
that if a flat module has a projective cover, then it is
projective. As Example~\ref{e2} shows, a flat module need not be
projective whenever it has a projective $\delta$-cover. However,
over a ring with a finitely generated right socle, a finitely
generated flat module is projective if it has a projective
$\delta$-cover. Section 3 is concerned with those rings $R$ whose
right $R$-modules have flat $\delta$-covers. We call them {\em
`right generalized $\delta$-perfect'} ({\em right $G$-$\delta$-perfect},
for short) rings and show that this notion is a proper
generalization of $\delta$-perfect rings. As Example~\ref{e1}
shows, this notion is not left-right symmetric. We prove that if
$R$ is a right $G$-$\delta$-perfect ring, then $J(R/S_r)$ is right
$T$-nilpotent. This result leads us to generalize some important
results proved in \cite{AAES}. For instance, we are able to show
that if $R$ is a right $G$-$\delta$-perfect ring, then $R$ is
right Artinian if and only if $R$ is right Noetherian. In the last
section, we give some characterizations of $\delta$-perfect and
$\delta$-semiperfect rings in terms of flat $\delta$-covers. We
also consider locally projective, finitely projective,
quasi-projective and direct-projective $\delta$-covers in order to
give some necessary and sufficient conditions for a ring to be
$\delta$-perfect or $\delta$-semiperfect.

Throughout this paper, $R$ denotes an associative ring with
identity and modules are unitary right $R$-modules. For a module
$M$, $Soc(M)$ is the socle and $Rad(M)$ is the Jacobson radical of
$M$. $S_r$ and $J(R)$ will stand for the right socle and the
Jacobson radical of a ring $R$, respectively. We will denote a
direct summand (resp., small submodule) of a module $M$ by
 $K\leq^{\oplus} M$ (resp., $K\ll M$).

 As a generalization of small submodules, in \cite{zhou}, Zhou introduce
$\delta$-small submodules as follows:

A submodule $N$ of a module $M$ is said to be {\em $\delta$-small} if $N+K\neq M$ for any proper submodule
$K$ of $M$ with $M/K$ singular, and it is denoted by $N\ll_{\delta} M$.
By this definition, every small or nonsingular semisimple submodule of $M$ is $\delta$-small in $M$.

The below lemma, which is appeared in \cite{zhou}, gives a necessary and sufficient condition for a submodule $N$ of
$M$ to be $\delta$-small in $M$ and we will use it throughout the paper.

\begin{lem}\label{lz} \cite[Lemma 1.2]{zhou} The following are equivalent:

$(1)$ $N\ll_{\delta} M$

$(2)$ If $X+N=M$, then $M=X\oplus Y$ for a projective semisimple submodule $Y$ with $Y\subseteq N$.
\end{lem}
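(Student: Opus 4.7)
The plan is to prove the two implications separately, using Zorn's lemma in the forward direction and a short direct argument in the reverse direction.

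For $(1) \Rightarrow (2)$, I would assume $N \ll_\delta M$ and $X + N = M$, and use Zorn's lemma to produce $Y \leq N$ maximal with $X \cap Y = 0$. The first goal is $M = X \oplus Y$. To achieve it I would show that $M/(X+Y)$ is singular. Since $(X + Y) + N = M$, every coset $\bar n = n + (X+Y)$ has a representative $n \in N$; if $n \notin X + Y$, then in particular $n \notin Y$, so maximality forces $X \cap (Y + Rn) \neq 0$. More carefully, for each $0 \neq s \in R$ I would consider $ns$: either $ns \in X + Y$ (so $s \in \text{ann}(\bar n)$) or the maximality applied to $Y + R(ns)$ produces $0 \neq x = y + nst$ for some $y \in Y$, $t \in R$, and a short check shows $st \neq 0$ and $st \in \text{ann}(\bar n)$. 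This gives $\text{ann}(\bar n)$ essential, so $M/(X+Y)$ is singular. Combined with $N + (X + Y) = M$ and $N \ll_\delta M$ this forces $X + Y = M$, so $M = X \oplus Y$.

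It remains to show $Y$ is projective semisimple. For semisimplicity I would rerun the Zorn argument inside $Y$: for any $Y_0 \leq Y$, pick $W \leq Y$ maximal with $(X + Y_0) \cap W = 0$, show $Y/(Y_0+W) \cong M/(X+Y_0+W)$ is singular by the same essentiality calculation, and conclude via $\delta$-smallness that $Y = Y_0 \oplus W$; hence every submodule of $Y$ is a summand and $Y$ is semisimple. For the projective part, observe that for any proper $Y_0 \subsetneq Y$ the relation $N + (X + Y_0) = M$ together with $\delta$-smallness forbids $M/(X+Y_0) \cong Y/Y_0$ from being singular; so $Y$ has no nonzero singular quotient. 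Since each simple summand of the semisimple module $Y$ is a quotient of $Y$, no simple summand can be singular, which means every simple summand is projective and $Y$ is projective semisimple.

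For $(2) \Rightarrow (1)$, the argument is immediate. Assume the splitting property and take any $K \leq M$ with $N + K = M$ and $M/K$ singular. Applying $(2)$ with $X = K$ gives a projective semisimple $Y \leq N$ with $M = K \oplus Y$. Then $Y \cong M/K$ is singular; but $Y$ is also nonsingular because it is projective semisimple. Hence $Y = 0$ and $K = M$, so $N \ll_\delta M$.

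The main obstacle is the essentiality calculation in the forward direction: maximality of $Y$ (respectively $W$) only gives a nonzero intersection, while singularity of the quotient requires that $\text{ann}(\bar n)$ meet every nonzero right ideal $sR$. The bookkeeping showing that one can always produce a product $st \neq 0$ landing in $\text{ann}(\bar n)$ is the only substantial step; everything else is modular arithmetic on direct sums and a direct use of the definition of $\ll_\delta$.
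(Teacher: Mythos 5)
The paper gives no proof of this lemma --- it is quoted verbatim from Zhou \cite[Lemma 1.2]{zhou} --- so there is nothing internal to compare against; your argument is the standard one (a complement $Y\subseteq N$ of $X$ chosen by Zorn's lemma, singularity of $M/(X\oplus Y)$ via the essential-annihilator computation, then $\delta$-smallness forcing $M=X\oplus Y$), and it is correct, including the semisimplicity and projectivity steps. The only point you leave implicit is the dichotomy that a simple module is either singular or projective (according as the corresponding maximal right ideal is or is not essential), which is exactly what converts ``$Y$ has no nonzero singular quotient'' into ``every simple summand of $Y$ is projective''; it is standard and worth one line. The converse direction likewise silently uses that a nonzero projective simple module is nonsingular (a nonzero direct summand $eR$ of $R$ has non-essential annihilator), which is equally routine.
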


According to \cite[Lemma 1.5]{zhou},  the submodule $\delta(M)=\sum \{L\subseteq M| L\ll_{\delta} M
\}$ which is also equal to the
intersection of all essential maximal submodules of $M$ whenever
$M$ is projective (see \cite[Lemma 1.9]{zhou}). We will use the
notation $\delta_r$ to indicate the intersection of all essential
maximal right ideals of $R$. Note from \cite[Corollary 1.7]{zhou} that $J(R/S_r)=\delta_r/S_r$.

\section{Flat $\delta$-covers}

\begin{defn} {\rm An epimorphism $f:P\rightarrow M$ is called a {\em $\delta$-cover of $M$} in case $Ker(f)\ll_{\delta} P$.}
\end{defn}

We start with some basic properties of $\delta$-covers. The proofs of the following
three lemmas are straightforward, so we omit them.

\begin{lem}\label{l1} If $f:P\rightarrow M$ and $g:M\rightarrow N$ are $\delta$-covers, then $gf:P\rightarrow N$ is a $\delta$-cover.
\end{lem}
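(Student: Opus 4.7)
My plan is to verify the criterion of Lemma~\ref{lz} directly. The composition $gf$ is clearly an epimorphism, and $\ker(gf)=f^{-1}(\ker g)$, so the whole task reduces to showing $f^{-1}(\ker g)\ll_{\delta}P$. I would start from an arbitrary decomposition $X+f^{-1}(\ker g)=P$, apply $f$ (which is surjective), and obtain $f(X)+\ker g=M$. Since $\ker g\ll_{\delta}M$, Lemma~\ref{lz} yields $M=f(X)\oplus Y_1$ for some projective semisimple $Y_1\subseteq \ker g$.

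The next step is to lift $Y_1$ back into $P$. Because $Y_1$ is projective, the surjection $f^{-1}(Y_1)\twoheadrightarrow Y_1$ (whose kernel is $\ker f$) splits, giving a submodule $Y_1'\subseteq f^{-1}(Y_1)\subseteq \ker(gf)$ with $Y_1'\cong Y_1$ and $f^{-1}(Y_1)=\ker f\oplus Y_1'$. A short disjointness check — any element of $X\cap Y_1'$ is sent by $f$ into $f(X)\cap Y_1=0$, so it lies in $\ker f\cap Y_1'=0$ — shows $X\cap Y_1'=0$, hence $X+Y_1'=X\oplus Y_1'$ and $P=X+f^{-1}(Y_1)=(X\oplus Y_1')+\ker f$.

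Now I would apply Lemma~\ref{lz} a second time, this time to $\ker f\ll_{\delta}P$, to obtain $P=(X\oplus Y_1')\oplus Y_2$ with $Y_2$ projective semisimple and $Y_2\subseteq \ker f\subseteq \ker(gf)$. Setting $Y:=Y_1'\oplus Y_2$, which is projective semisimple and contained in $\ker(gf)$, yields $P=X\oplus Y$. One more invocation of Lemma~\ref{lz} then gives $\ker(gf)\ll_{\delta}P$, as desired.

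The only subtle point is transporting the semisimple summand $Y_1\subseteq M$ back into $P$; projectivity of $Y_1$ is exactly what makes the restricted surjection split, so this is the key structural input. Everything else — the kernel identity for $gf$, the intersection argument, and the two passes through the criterion of Lemma~\ref{lz} — is routine bookkeeping, which presumably is why the author declares the proof straightforward.
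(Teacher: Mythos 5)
Your argument is correct: the identification $\ker(gf)=f^{-1}(\ker g)$, the two applications of the criterion in Lemma~\ref{lz} (first to $\ker g\ll_{\delta}M$, then to $\ker f\ll_{\delta}P$), and the lifting of the projective semisimple complement $Y_1$ through the split surjection $f^{-1}(Y_1)\twoheadrightarrow Y_1$ all check out, and $Y=Y_1'\oplus Y_2$ is indeed a projective semisimple submodule of $\ker(gf)$ complementing $X$. The paper omits the proof as ``straightforward,'' so there is nothing to compare against directly; it is worth noting, though, that there is a shorter route that bypasses Lemma~\ref{lz} and the lifting step entirely: if $\ker(gf)+K=P$ with $P/K$ singular, then applying $f$ gives $\ker g+f(K)=M$ with $M/f(K)\cong P/(K+\ker f)$ singular (a quotient of the singular module $P/K$), so $f(K)=M$ by $\delta$-smallness of $\ker g$; hence $K+\ker f=P$, and $\delta$-smallness of $\ker f$ forces $K=P$. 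Your version costs a little more bookkeeping but has the merit of producing the explicit decomposition $P=X\oplus Y$ required by condition (2) of Lemma~\ref{lz}, which is the form of the conclusion used repeatedly elsewhere in the paper.
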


\begin{lem}\label{b1} If each $f_i:P_i\rightarrow M_i$ is a $\delta$-cover for $i=1,\ldots,n$, then
$\oplus_{i=1}^{n}f_i:\oplus_{i=1}^{n} P_i\rightarrow \oplus_{i=1}^{n} M_i$ is a $\delta$-cover.
\end{lem}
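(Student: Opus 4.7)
The plan is to induct on $n$, so that everything reduces to proving the case $n=2$: if $K_1 \ll_\delta P_1$ and $K_2 \ll_\delta P_2$, then $K_1 \oplus K_2 \ll_\delta P_1 \oplus P_2$, where $K_i = \ker(f_i)$ (note $\ker(f_1 \oplus f_2) = K_1 \oplus K_2$, and the map is clearly surjective). I would break this into two sublemmas, each invoked via Lemma \ref{lz}: first, that each $K_i$, embedded into $M := P_1 \oplus P_2$, remains $\delta$-small in $M$; second, that the sum of two $\delta$-small submodules of any module is itself $\delta$-small.

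For the first sublemma, suppose $K_1 + X = M$ for some $X \le M$. Since $K_1 \subseteq P_1$, the equality $M = K_1 + X$ forces $P_1 + X = M$, and one checks directly that $P_1 = K_1 + (X \cap P_1)$. Applying Lemma \ref{lz} to $K_1 \ll_\delta P_1$ yields $P_1 = (X \cap P_1) \oplus Y_1$ with $Y_1$ projective semisimple and $Y_1 \subseteq K_1$. A short verification then shows $M = X \oplus Y_1$ (using $P_1 + X = M$ for surjectivity and $Y_1 \cap X \subseteq Y_1 \cap P_1 \cap X = 0$ for the sum being direct), so Lemma \ref{lz} gives $K_1 \ll_\delta M$, and symmetrically $K_2 \ll_\delta M$.

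For the second sublemma, suppose $A, B \ll_\delta M$ and $(A + B) + X = M$. Since $A + (B + X) = M$ and $A \ll_\delta M$, Lemma \ref{lz} furnishes $M = (B + X) \oplus Y_1$ with $Y_1 \subseteq A$ projective semisimple. Then $X \cap Y_1 \subseteq (B+X) \cap Y_1 = 0$, so $X + Y_1 = X \oplus Y_1$, and from $B + (X + Y_1) = M$ with $B \ll_\delta M$, a second application of Lemma \ref{lz} yields $M = (X + Y_1) \oplus Y_2$ with $Y_2 \subseteq B$ projective semisimple. One checks $X \cap (Y_1 \oplus Y_2) = 0$, whence $M = X \oplus (Y_1 \oplus Y_2)$ with $Y_1 \oplus Y_2 \subseteq A + B$ projective semisimple, so $A + B \ll_\delta M$ by Lemma \ref{lz}.

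Combining the two sublemmas gives $K_1 \oplus K_2 = K_1 + K_2 \ll_\delta M$, completing the $n=2$ case, and the induction does the rest. The main technical care is in the second sublemma: one must track the successive direct sum decompositions to make sure the accumulated complement of $X$ still lies inside $A + B$ and is still projective semisimple. Apart from that, everything is a routine application of Lemma \ref{lz}, which is presumably why the author felt comfortable omitting the proof.
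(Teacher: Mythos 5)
Your proof is correct, and it is essentially the intended (omitted) argument: the two sublemmas you isolate --- that a $\delta$-small submodule of a direct summand remains $\delta$-small in the ambient module, and that a finite sum of $\delta$-small submodules is $\delta$-small --- are exactly Lemma 1.3 of the cited reference \cite{zhou}, which is why the paper dismisses the proof as straightforward. Your verifications of both facts via Lemma~\ref{lz} are accurate, including the one genuinely delicate point you flag, namely checking that $Y_1\oplus Y_2$ is an internal direct sum meeting $X$ trivially and still lies in $A+B$.
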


\begin{lem}\label{b3} If $N\leq^{\oplus} M$ and $A\ll_{\delta} M$, then $A\cap N\ll_{\delta} N$.
\end{lem}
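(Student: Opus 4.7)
The plan is to argue directly from the definition of $\delta$-small submodule rather than through the decomposition form in Lemma \ref{lz}. Since $N \leq^{\oplus} M$, fix a complement $N'$ with $M = N \oplus N'$. To establish $A \cap N \ll_{\delta} N$, I will take an arbitrary submodule $K' \leq N$ satisfying $K' + (A \cap N) = N$ with $N/K'$ singular and show $K' = N$; this is exactly the contrapositive of the defining condition.

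The key move is to lift $K'$ to the submodule $K = K' + N'$ of $M$ and apply the hypothesis $A \ll_{\delta} M$ to it. Two things need checking. First, because $K' \subseteq N$ and $N \cap N' = 0$, the sum $K' + N'$ is direct, and the modular law gives $N \cap (K' \oplus N') = K' \oplus (N \cap N') = K'$, so the natural inclusion $N \hookrightarrow M$ induces an isomorphism
\[
N/K' \;\cong\; M/K,
\]
transporting singularity from $N/K'$ to $M/K$. Second,
\[
A + K \;\supseteq\; (A \cap N) + K' + N' \;=\; N + N' \;=\; M,
\]
so $A + K = M$.

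By $A \ll_{\delta} M$ this forces $K = M$, that is, $K' + N' = N \oplus N'$. Since $K' \subseteq N$, intersecting with $N$ (again via the modular law) gives $K' = N$, which is the desired conclusion. I expect no real obstacle here; the one step that has to be handled with care is the isomorphism $M/K \cong N/K'$ needed to preserve the singularity hypothesis when passing from $N$ up to $M$, and this is standard once the direct-sum decomposition $M = N \oplus N'$ is in place.
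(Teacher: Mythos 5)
Your proof is correct. The paper omits the proof of this lemma as ``straightforward,'' and your argument --- lifting a candidate $K'\leq N$ with $K'+(A\cap N)=N$ and $N/K'$ singular to $K=K'+N'$ in $M$, checking $M/K\cong N/K'$ is singular and $A+K=M$, and then pulling $K=M$ back down via the modular law --- is exactly the routine verification the author had in mind; all the steps (the second isomorphism theorem for $M/K\cong N/K'$, and $N\cap(K'+N')=K'$ for $K'\subseteq N$) are sound.
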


\begin{lem}\label{b4} Let $K$ be a submodule of a projective module $F$.
If $F/K$ has a $\delta$-cover, then it has a $\delta$-cover of the form
$f:F/L\rightarrow F/K$ with $Ker(f)=K/L$, where $L\subseteq K$.
\end{lem}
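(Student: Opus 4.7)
The plan is to lift the projection $\pi:F\rightarrow F/K$ through the given $\delta$-cover using projectivity of $F$, then show that the image of the lift is (isomorphic to) $F/L$ for a suitable $L\subseteq K$. Concretely, if $g:P\rightarrow F/K$ is a $\delta$-cover with $\ker(g)\ll_{\delta} P$, projectivity of $F$ yields a map $h:F\rightarrow P$ with $gh=\pi$. I would first verify $h(F)+\ker(g)=P$: for $p\in P$ write $g(p)=x+K$ and note $g(p-h(x))=0$, so $p\in h(F)+\ker(g)$.

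Next I would apply Lemma \ref{lz} to conclude $P=h(F)\oplus Y$ for some projective semisimple $Y\subseteq \ker(g)$, so in particular $h(F)$ is a direct summand of $P$. Setting $L=\ker(h)$, the relation $gh=\pi$ forces $L\subseteq K$, and $h$ descends to an isomorphism $\bar h:F/L\rightarrow h(F)$.

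I would then take $f:F/L\rightarrow F/K$ to be the canonical projection (which is an epimorphism with kernel $K/L$). The identity $g\bar h(x+L)=\pi(x)=x+K$ shows $f=g\bar h$, and under the isomorphism $\bar h$ the kernel $K/L$ corresponds precisely to $\ker(g)\cap h(F)$. Applying Lemma \ref{b3} to the direct summand $h(F)\leq^{\oplus}P$ and the $\delta$-small submodule $\ker(g)\ll_{\delta}P$ yields $\ker(g)\cap h(F)\ll_{\delta}h(F)$, which transfers via $\bar h$ to $K/L\ll_{\delta}F/L$, as required.

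The core of the argument is standard lifting, so I do not expect real difficulty; the one point demanding a little care is the transfer of $\delta$-smallness from $P$ across the isomorphism $\bar h^{-1}$, which is why Lemma \ref{b3} (direct-summand restriction of a $\delta$-small submodule) is the essential ingredient rather than a direct argument inside $F/L$.
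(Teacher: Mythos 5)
Your argument is correct and follows essentially the same route as the paper: lift $\pi$ through the given $\delta$-cover by projectivity, use Lemma~\ref{lz} to see that the image of the lift is a direct summand, apply Lemma~\ref{b3} to restrict the $\delta$-small kernel to that summand, and transport everything back to $F/\ker(h)$ via the induced isomorphism. The only cosmetic difference is that the paper invokes Lemma~\ref{l1} (composing the isomorphism with the restricted cover) to conclude $K/L\ll_{\delta}F/L$, whereas you transfer the $\delta$-smallness directly across $\bar h$; these are the same step.
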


\begin{proof} Let $f:P\rightarrow F/K$ be a $\delta$-cover
of $F/K$ and $\pi:F\rightarrow F/K$ be the natural epimorphism.
Since $F$ is projective, there exists a homomorphism
$\lambda:F\rightarrow P$ such that the below diagram commutes.

\[\begin{diagram}
\node[2]{F}\arrow{sw,t,..}{\lambda}\arrow{s,r}\pi \\
\node{P}\arrow{e,t}{f}\node{F/K}\arrow{e}\node{0}
\end{diagram}\]
Then $P=Ker(f)+Im(\lambda)$. It follows from Lemma~\ref{lz}
that $P=Y\oplus Im(\lambda)$ for a semisimple
submodule $Y$ with $Y\subseteq Ker(f)$. Also, by
Lemma~\ref{b3}, $Ker(f|_{Im(\lambda)})=Im(\lambda)\cap
Ker(f)\ll_{\delta} Im(\lambda)$. So $f|_{Im(\lambda)}:
Im(\lambda)\rightarrow F/K$ is also a $\delta$-cover of $F/K$. But
$F/Ker(\lambda)\cong Im(\lambda)$ and since $f \lambda=\pi$,
$Ker(\lambda)\subseteq Ker(\pi)=K$. Now consider the isomorphism $\lambda':F/Ker(\lambda)\rightarrow Im(\lambda)$
and let $\phi:=f|_{Im(\lambda)} \lambda': F/Ker(\lambda)\rightarrow F/K$. Then $Ker(\phi)=K/Ker(\lambda)$ and by
Lemma~\ref{l1}, $Ker(\phi)\ll_{\delta} F/Ker(\lambda)$.
\end{proof}

Since any finitely generated (resp., cyclic) module is an epimorphic image of a finitely generated
(resp., cyclic) free module,  we obtain the following result by the proof of Lemma~\ref{b4}.

\begin{lem}\label{b5} If $f:P\rightarrow M$ is a $\delta$-cover of a finitely generated (cyclic) module $M$,
 then there exists a finitely generated (cyclic) direct summand $P'$ of $P$ such that
$f|_{P'}$ is a $\delta$-cover of $M$.
\end{lem}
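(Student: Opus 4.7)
The plan is to mimic the argument in the proof of Lemma~\ref{b4}, but to start from a finitely generated (respectively cyclic) free module mapping onto $M$, so that the summand we extract is automatically finitely generated (respectively cyclic).

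First I would write $M$ as an epimorphic image $\pi:F\rightarrow M$ of a finitely generated (respectively cyclic) free module $F$; such an $F$ exists because $M$ is finitely generated (respectively cyclic). Since $F$ is projective and $f:P\rightarrow M$ is surjective, I can lift $\pi$ through $f$: there is $\lambda:F\rightarrow P$ with $f\lambda=\pi$. Because $\pi$ is surjective, $f(\operatorname{Im}(\lambda))=M$, which gives $P=\operatorname{Im}(\lambda)+\operatorname{Ker}(f)$.

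Next, I would invoke Lemma~\ref{lz}: since $\operatorname{Ker}(f)\ll_{\delta}P$, the decomposition $P=\operatorname{Im}(\lambda)+\operatorname{Ker}(f)$ yields $P=\operatorname{Im}(\lambda)\oplus Y$ for some (projective) semisimple $Y\subseteq\operatorname{Ker}(f)$. Set $P':=\operatorname{Im}(\lambda)$. This is a direct summand of $P$, and it is finitely generated (respectively cyclic) as a homomorphic image of $F$.

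Finally, I would check that $f|_{P'}:P'\rightarrow M$ is a $\delta$-cover. Surjectivity is immediate from $f\lambda=\pi$ and the surjectivity of $\pi$. For the kernel, Lemma~\ref{b3} applies to the direct summand $P'\leq^{\oplus}P$ and to $\operatorname{Ker}(f)\ll_{\delta}P$, giving $\operatorname{Ker}(f|_{P'})=P'\cap\operatorname{Ker}(f)\ll_{\delta}P'$. No serious obstacle arises; the only point to be careful about is observing that $\operatorname{Im}(\lambda)$ inherits finite generation (respectively cyclicity) from $F$, which is precisely the new ingredient beyond Lemma~\ref{b4}.
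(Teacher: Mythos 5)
Your proof is correct and is exactly the argument the paper intends: it notes that $M$ is an image of a finitely generated (cyclic) free module and then runs the proof of Lemma~\ref{b4}, extracting $\operatorname{Im}(\lambda)$ as the summand via Lemma~\ref{lz} and checking the kernel with Lemma~\ref{b3}. Nothing is missing.
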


\begin{defn}
{\rm A $\delta$-cover  $f:P\rightarrow M$ is called a {\em flat
$\delta$-cover} of $M$ in case $P$ is a flat module.}
\end{defn}

It is clear that if a module has a projective $\delta$-cover, then
it also has a flat $\delta$-cover. By Example~\ref{e1} below, the
converse does not hold in general. Now we will investigate under
which condition a module $M$ has a projective $\delta$-cover
whenever it has a flat $\delta$-cover. But we need some results in
order to prove one of the main result of this section.

Locally projective modules are introduced by Zimmermann-Huisgen
(\cite{Zim}) and we know from \cite[Proposition 6]{Azumaya} that an
$R$-module $M$ is {\em locally projective} if and only if for any
$x\in M$ there exist a finite number of homomorphisms
$f_i:M\rightarrow R$ ($i=1,\ldots,n$) and elements $y_i\in M$
($i=1,\ldots,n$) such that $y_1f_1(x)+\cdots+y_nf_n(x)=x$. It is
well-known that the following implications hold for a module:

\begin{center}
projective $\Rightarrow$ locally projective $\Rightarrow$ flat.
\end{center}

\begin{prop}\label{p1} If $M$ is a locally projective module, then $M \delta_r=\delta(M)$.
\end{prop}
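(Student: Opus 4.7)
The plan is to prove the two inclusions separately. The inclusion $M\delta_r \subseteq \delta(M)$ will hold for every module $M$ (no local projectivity required), while the reverse inclusion $\delta(M) \subseteq M\delta_r$ is exactly where the hypothesis is used, via the explicit formula $x = \sum y_i f_i(x)$.

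For the first inclusion, observe that $R_R$ is projective, so the Lemma 1.9 of Zhou cited in Section~1 gives $\delta(R_R) = \delta_r$. For each $m \in M$, right-multiplication $\mu_m : R_R \to M$, $r \mapsto mr$, is an $R$-homomorphism. Since $\delta$ is a radical (module homomorphisms carry $\delta$-small submodules to $\delta$-small submodules, a standard property from Lemma 1.5 of Zhou), we get $\mu_m(\delta(R_R)) \subseteq \delta(M)$, i.e.\ $m\delta_r \subseteq \delta(M)$. Summing over $m \in M$ yields $M\delta_r \subseteq \delta(M)$.

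For the reverse inclusion, let $x \in \delta(M)$. The local projectivity hypothesis, in the form recalled from \cite{Azumaya} just above the proposition, supplies homomorphisms $f_1,\ldots,f_n : M \to R$ and elements $y_1,\ldots,y_n \in M$ with $x = \sum_{i=1}^n y_i f_i(x)$. Each $f_i$ maps into $R_R$, so by the same functoriality of $\delta$ we get $f_i(\delta(M)) \subseteq \delta(R_R) = \delta_r$. In particular $f_i(x) \in \delta_r$ for each $i$, whence $x = \sum_{i=1}^n y_i f_i(x) \in M\delta_r$, as required.

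The main obstacle is nothing more than the functoriality statement for $\delta$, which is not spelled out in the excerpt. If one wishes to avoid citing Zhou's Lemma 1.5 it is routine to derive from Lemma~\ref{lz}: given a homomorphism $f : M \to N$ and $L \ll_\delta M$, if $f(L) + X = N$ with $N/X$ singular, pulling back gives $L + f^{-1}(X) = M$ with $M/f^{-1}(X)$ embedding into the singular $N/X$; Lemma~\ref{lz} then splits off a projective semisimple $Y \subseteq L$ with $M = f^{-1}(X) \oplus Y$, and applying $f$ yields $N = X \oplus f(Y)$ with $f(Y) \subseteq f(L)$ projective semisimple. Once this functoriality is in hand, the proof is essentially one line in each direction.
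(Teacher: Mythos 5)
Your proof is correct and follows essentially the same route as the paper's: the reverse inclusion via the representation $x=\sum y_if_i(x)$ together with $f_i(\delta(M))\subseteq\delta_r$ is exactly the paper's argument, and the easy inclusion $M\delta_r\subseteq\delta(M)$ is the fact the paper simply cites from Zhou's Lemma 1.5(2), which you merely re-derive from $\delta(R_R)=\delta_r$ and the functoriality of $\delta$.
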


\begin{proof} By \cite[Lemma 1.5(2)]{zhou}, the inclusion $M \delta_r\subseteq \delta(M)$ always holds.
 For the reverse inclusion let $x\in \delta(M)$. Then by hypothesis,
  there exist a finite number of homomorphisms $f_i:M\rightarrow R$
and elements $y_i\in M$ ($i=1,\ldots,n$) such that $y_1f_1(x)+\cdots+y_nf_n(x)=x$.
It follows from \cite[Lemma 1.5(2)]{zhou} that
$f_i(\delta(M))\subseteq \delta_r$ for each $i$ and so $f_i(x)\in \delta_r$ for each $i$.
 Hence, we obtain that $x\in M \delta_r$.
\end{proof}

\begin{defn}  {\rm An epimorphism $f:P\rightarrow M$ is called a {\em generalized (locally) projective
$\delta$-cover of $M$}
 in case $Ker(f)\subseteq\delta(P)$ and $P$ is (locally) projective.}
\end{defn}

For a homomorphism $f:P\rightarrow M$, the inclusion
$f(\delta(P))\subseteq \delta(M)$ always holds by \cite[Lemma
1.5(2)]{zhou}. It can be observed that the equality holds whenever
$f:P\rightarrow M$ is an epimorphism and $Ker(f)\subseteq
\delta(P)$. By this fact, we obtain the following result.

\begin{cor} If a module $M$ has a generalized locally projective $\delta$-cover, then $M \delta_r=\delta(M)$.
\end{cor}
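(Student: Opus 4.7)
The plan is to string together Proposition~\ref{p1} with the remark immediately preceding the corollary; almost nothing else is needed, since ``generalized locally projective $\delta$-cover'' is designed exactly to make both ingredients applicable.

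First, I would unpack the hypothesis: there is an epimorphism $f:P\rightarrow M$ with $P$ locally projective and $\mathrm{Ker}(f)\subseteq \delta(P)$. Because $P$ is locally projective, Proposition~\ref{p1} gives $P\delta_r=\delta(P)$. The next step is to push this equality down to $M$ via $f$: apply $f$ to both sides to get $f(P\delta_r)=f(\delta(P))$.

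Then I would handle the two sides separately. For the left side, since $f$ is an $R$-linear epimorphism, any element of $M\delta_r$ can be written as a finite sum $\sum f(p_i)a_i=f(\sum p_ia_i)$ with $a_i\in\delta_r$, so $M\delta_r\subseteq f(P\delta_r)$; the reverse inclusion is immediate, giving $f(P\delta_r)=M\delta_r$. For the right side, the observation recorded just before the corollary---namely that $f(\delta(P))=\delta(M)$ whenever $f$ is an epimorphism with $\mathrm{Ker}(f)\subseteq\delta(P)$---yields $f(\delta(P))=\delta(M)$. Chaining these equalities gives $M\delta_r=\delta(M)$.

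There is no real obstacle here; the only tiny verification is the commutation $f(P\delta_r)=M\delta_r$, but this is formal from surjectivity and $R$-linearity. All of the conceptual content was absorbed into Proposition~\ref{p1} and into the remark about $f(\delta(P))=\delta(M)$, which is why the author introduces the ``generalized locally projective $\delta$-cover'' terminology: it is precisely the weakest hypothesis under which both facts apply.
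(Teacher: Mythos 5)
Your argument is correct and coincides with the paper's own proof, which is exactly the chain $\delta(M)=f(\delta(P))=f(P\delta_r)=f(P)\delta_r=M\delta_r$ obtained from the remark preceding the corollary, Proposition~\ref{p1}, and surjectivity of $f$. Nothing is missing.
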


\begin{proof} Let $f:P\rightarrow M$ be a generalized locally projective $\delta$-cover of $M$.
Then $\delta(M)=f(\delta(P))=f(P\delta_r)=f(P)\delta_r=M\delta_r$.
\end{proof}

\begin{prop}\label{p2} If $M$ is a locally projective module, then $MS_r=Soc(M)$.
\end{prop}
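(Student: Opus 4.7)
The plan is to prove $MS_r \subseteq Soc(M)$ and $Soc(M) \subseteq MS_r$ separately, in parallel with the proof of Proposition~\ref{p1}, where $\delta_r$ and $\delta(M)$ played the roles that $S_r$ and $Soc(M)$ now play.

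For the inclusion $MS_r \subseteq Soc(M)$, I would note that this is valid for every right $R$-module, with no appeal to local projectivity: for any $m\in M$ and any minimal right ideal $I$ of $R$, the map $r \mapsto mr$ is an $R$-epimorphism from the simple module $I$ onto $mI$, so $mI$ is either zero or simple. Since $S_r$ is the sum of all minimal right ideals, $MS_r$ is a sum of simple submodules of $M$ and hence sits inside $Soc(M)$.

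For the reverse inclusion, I would take $x \in Soc(M)$ and apply the characterization of local projectivity recalled before Proposition~\ref{p1} to produce homomorphisms $f_i : M \rightarrow R$ and elements $y_i \in M$ ($i=1,\ldots,n$) with $x = y_1 f_1(x) + \cdots + y_n f_n(x)$. The crux is then to verify that $f_i(x) \in S_r$ for each $i$. Since $xR \subseteq Soc(M)$ is semisimple, its homomorphic image $f_i(xR) = f_i(x)R$ is a semisimple right ideal of $R$, which gives $f_i(x) \in f_i(x)R \subseteq S_r$; consequently $x = \sum y_i f_i(x) \in MS_r$.

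I do not anticipate any real obstacle here: the argument is a direct socle-analogue of Proposition~\ref{p1}, with the inclusion ``$f_i(\delta(M))\subseteq \delta_r$'' from \cite[Lemma 1.5(2)]{zhou} replaced by the elementary fact that homomorphic images of semisimple modules are semisimple. The only point requiring any care is recognizing that semisimplicity of $xR$ as a right $R$-module transfers under the right $R$-homomorphism $f_i$ to semisimplicity of $f_i(x)R$ as a right ideal of $R$, which is exactly what is needed to land inside $S_r = Soc(R_R)$.
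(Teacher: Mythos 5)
Your proof is correct and is exactly the argument the paper has in mind: the paper's proof of Proposition~\ref{p2} simply says it is ``similar to that of Proposition~\ref{p1}'', and your two inclusions (the elementary one $MS_r\subseteq Soc(M)$, and the reverse one via the local-projectivity representation $x=\sum y_if_i(x)$ together with the fact that $f_i(Soc(M))\subseteq Soc(R_R)=S_r$) are precisely the socle analogue of that proof. No issues.
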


\begin{proof} It follows from a proof similar to that of Proposition~\ref{p1}.
\end{proof}

\begin{rem}\label{r1} {\rm

1) Note that $[\delta(M)+Soc(M)]/Soc(M)\subseteq Rad(M/Soc(M))$
for any module $M$: Consider $\overline{m}=m+Soc(M)\in
[\delta(M)+Soc(M)]/Soc(M)$, where $m\in \delta(M)$. Suppose that
$\overline{m}\notin Rad(M/Soc(M))$. Then there exists a maximal
submodule of $M$ with $Soc(M)\subseteq L$ and $m\notin L$. So
$M=L+mR$. Since $mR\ll_{\delta} M$, $M=L\oplus Y$ for a projective
semisimple submodule $Y$ of $mR$. But $Soc(M)\subseteq L$ implies
that $Y=0$.
 It follows that
$M=L$, a contradiction.

2) It is easy to observe that if $P$ is a locally projective
$R$-module, then $P/PI$ is a locally projective $R/I$-module for
any ideal $I$ of $R$.

3) We know from \cite[Proposition 2.2]{Zim} that a locally
projective module with $Rad(M)=M$ is zero.

4) Recall from \cite[Proposition 10]{Azumaya} that a countably
generated locally projective module is projective.}
\end{rem}

\begin{prop}\label{p3} Let $M$ be a locally projective module with $\delta(M)=M$.
Then $M$ is a projective semisimple module.
\end{prop}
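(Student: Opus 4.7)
The plan is to prove the proposition in two stages: first show $M$ is semisimple (i.e., $M=\mathrm{Soc}(M)$) using the radical reasoning in Remark~\ref{r1}, then show every simple summand is projective using Remark~\ref{r1}(4).

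For the first stage, I would combine Proposition~\ref{p2} with Remark~\ref{r1}(1)--(3). By Proposition~\ref{p2}, since $M$ is locally projective, $\mathrm{Soc}(M)=MS_r$. Setting $\overline{M}=M/\mathrm{Soc}(M)=M/MS_r$, Remark~\ref{r1}(2) says that $\overline{M}$ is a locally projective $R/S_r$-module. Now invoke Remark~\ref{r1}(1): since $\delta(M)=M$, we have
\[
\overline{M}=[\delta(M)+\mathrm{Soc}(M)]/\mathrm{Soc}(M)\subseteq \mathrm{Rad}(\overline{M})\subseteq \overline{M},
\]
so $\mathrm{Rad}(\overline{M})=\overline{M}$. (The Jacobson radical of $\overline{M}$ as an $R$-module and as an $R/S_r$-module coincide, since $\overline{M}\cdot S_r=0$ makes every $R$-submodule an $R/S_r$-submodule and vice versa.) Applying Remark~\ref{r1}(3) to the locally projective $R/S_r$-module $\overline{M}$ yields $\overline{M}=0$, i.e., $M=\mathrm{Soc}(M)$.

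For the second stage, write $M=\bigoplus_{i\in I}S_i$ with each $S_i$ a simple submodule. Each $S_i$ is a direct summand of $M$, hence locally projective (local projectivity passes to summands via the dual basis characterization cited from \cite{Azumaya}). Since each $S_i$ is cyclic, it is in particular countably generated, so by Remark~\ref{r1}(4) each $S_i$ is projective. A direct sum of projective modules is projective, so $M$ is projective and semisimple.

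I expect the main obstacle to be the identification in the first stage, specifically verifying that $\mathrm{Rad}(\overline{M})$ computed over $R$ and over $R/S_r$ agree so that Remark~\ref{r1}(3) applies to $\overline{M}$ as an $R/S_r$-module. Everything else is an essentially mechanical assembly of Propositions~\ref{p1}, \ref{p2} and Remark~\ref{r1}; no delicate estimates with $\ll_\delta$ are needed beyond what those tools already package.
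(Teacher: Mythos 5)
Your proof is correct and follows essentially the same route as the paper's: Remark~\ref{r1}(1) to get $\mathrm{Rad}(M/\mathrm{Soc}(M))=M/\mathrm{Soc}(M)$, Remark~\ref{r1}(2) with Proposition~\ref{p2} to see $M/\mathrm{Soc}(M)$ is locally projective over $R/S_r$, Remark~\ref{r1}(3) to conclude $M=\mathrm{Soc}(M)$, and Remark~\ref{r1}(4) for projectivity of the simple summands. Your explicit checks (that the radical over $R$ and over $R/S_r$ agree, and that local projectivity passes to the simple direct summands) are points the paper leaves implicit, and they go through as you describe.
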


\begin{proof} Since $M=\delta(M)$, we get that
$Rad(M/Soc(M))=M/Soc(M)$ by Remark~\ref{r1}(1). Also,
Remark~\ref{r1}(2) together with Proposition~\ref{p2} implies that
$M/Soc(M)$ is a locally projective $R/S_r$-module. It follows from
Remark~\ref{r1}(3) that $M=Soc(M)$. Moreover, $M$ is projective
because a simple locally projective module is projective by
Remark~\ref{r1}(4).
\end{proof}

Recall from \cite{Lam} that a short exact sequence of right
$R$-modules $0\rightarrow A\stackrel{\varphi} \rightarrow B
\rightarrow C\rightarrow 0$ is {\em pure} if it remains exact
after being tensored with any left $R$-module. If this is the
case, then $\varphi(A)$ is called {\em a pure submodule} of $B$.
It is known that direct summands are pure submodules. Due to
\cite[Theorem 4]{pd}, if $N$ is a finitely generated pure
submodule of a projective module $P$, then it is a direct summand
of $P$. Let $A\subseteq B\subseteq D$ be right $R$-modules. If $A$
is pure in $B$ and $B$ is pure in $D$, then $A$ is pure in $D$
(see \cite[Examples 4.84(e)]{Lam}). Also, it follows from
\cite[Theorem 4.85]{Lam} that if $M/N$ is a flat $R$-module, then
$N$ is a pure submodule of $M$, and the converse holds if $M$ is
flat by \cite[Corollary 4.86(1)]{Lam}. We know from
\cite[Corollary 4.92]{Lam} that if $N$ is a pure submodule of $M$,
then $NI=N\cap MI$ for each left ideal $I$ of the ring $R$. If $M$
is a projective module, then the converse holds by \cite[Exercise
41, pg. 163]{Lam}. In addition, pure submodule of a locally
projective module is locally projective by \cite[Proposition
7]{Azumaya}.

Now we are ready to prove the following
result as promised.
\begin{thm}\label{t2} Suppose that a module $M$ has a flat
$\delta$-cover.
A generalized projective $\delta$-cover of $M$ is a
projective $\delta$-cover of $M$.
\end{thm}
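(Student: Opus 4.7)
The plan is to lift the generalized projective $\delta$-cover $f:P\to M$ through the flat $\delta$-cover $g:F\to M$ via projectivity of $P$, and then prove that the resulting map $h:P\to F$ has projective semisimple kernel. Once that is done, Lemma~\ref{l1} finishes the argument immediately.

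First I would choose $h:P\to F$ with $gh=f$. Surjectivity of $g$ gives $h(P)+\ker g=F$, so Lemma~\ref{lz} applied to $\ker g\ll_\delta F$ yields a decomposition $F=h(P)\oplus Y$ with $Y\subseteq \ker g$ projective semisimple. In particular, $h(P)\cong P/\ker h$ is a direct summand of the flat module $F$, hence flat, so by \cite[Theorem~4.85]{Lam} the submodule $\ker h$ is pure in $P$.

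The heart of the proof is to show that $\ker h$ is projective semisimple. Since $P$ is projective, Proposition~\ref{p1} gives $\delta(P)=P\delta_r$; the hypothesis $\ker f\subseteq\delta(P)$ therefore implies $\ker h\subseteq P\delta_r$. Because $\delta_r$ is a two-sided ideal (being the preimage of $J(R/S_r)$ under $R\to R/S_r$), purity of $\ker h$ together with \cite[Corollary~4.92]{Lam} yields
\[
(\ker h)\delta_r \;=\; \ker h\cap P\delta_r \;=\; \ker h.
\]
Since $N\delta_r\subseteq\delta(N)$ for every module $N$ by \cite[Lemma~1.5(2)]{zhou}, we conclude $\delta(\ker h)=\ker h$. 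Also, pure submodules of locally projective modules are locally projective by \cite[Proposition~7]{Azumaya}, and $P$ is locally projective, so $\ker h$ is locally projective. Proposition~\ref{p3} now delivers that $\ker h$ is projective semisimple.

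With $\ker h$ projective semisimple, $\ker h\ll_\delta P$ is immediate: given any $X$ with $X+\ker h=P$, the semisimple decomposition $\ker h=(X\cap \ker h)\oplus Z$ produces $P=X\oplus Z$ with $Z\subseteq\ker h$ projective semisimple, verifying the criterion of Lemma~\ref{lz}. So the natural projection $\pi:P\to P/\ker h$ is a $\delta$-cover. On the other hand, the induced map $\bar f:P/\ker h\to M$ has kernel $\ker f/\ker h$, which under the isomorphism $P/\ker h\cong h(P)$ corresponds to $h(\ker f)=h(P)\cap \ker g$, and this is $\delta$-small in $h(P)$ by Lemma~\ref{b3} since $h(P)\leq^{\oplus}F$ and $\ker g\ll_\delta F$. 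Hence $\bar f$ is also a $\delta$-cover, and Lemma~\ref{l1} gives that the composite $f=\bar f\circ\pi$ is a $\delta$-cover, i.e., $\ker f\ll_\delta P$, as required. The main obstacle is the identity $\delta(\ker h)=\ker h$: it is the single step that welds together the flatness of $F$, the containment $\ker f\subseteq\delta(P)$, and the two-sided structure of $\delta_r$, and without it one cannot apply Proposition~\ref{p3} to extract the projective semisimple conclusion.
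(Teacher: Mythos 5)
Your proposal is correct and follows essentially the same route as the paper's proof: lift through projectivity to get $h:P\to F$, use Lemma~\ref{lz} to split off $h(P)$ as a direct summand of the flat module $F$, deduce that $\ker h$ is a pure, locally projective submodule of $P$ contained in $\delta(P)=P\delta_r$, apply \cite[Corollary~4.92]{Lam} to get $\delta(\ker h)=\ker h$, invoke Proposition~\ref{p3}, and finish by composing $\delta$-covers via Lemmas~\ref{b3} and~\ref{l1}. The only cosmetic difference is that you obtain $\delta(\ker h)=\ker h$ directly from the inclusion $N\delta_r\subseteq\delta(N)$, whereas the paper routes this through Proposition~\ref{p1}; both are valid.
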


\begin{proof} Let $f:X\rightarrow M$ be a flat $\delta$-cover and
$g:P\rightarrow M$ a generalized projective $\delta$-cover of $M$.
$P$ being projective implies that there exists a homomorphism
$h:P\rightarrow X$ such that $fh=g$. Then $X=Ker(f)+Im(h)$. Since
$Ker(f)\ll_{\delta} X$, $X=T\oplus Im(h)$ for a projective
semisimple submodule $T$ with $T\subseteq Ker(f)$ by
Lemma~\ref{lz}. As $Im(h)\leq^{\oplus} X$, $Im(h)$ is also a flat
module. Hence, $P/Ker(h)$ is flat and so $Ker(h)$ is a pure
submodule of $P$. Moreover, $Ker(h)$ is locally projective. On the
other hand, $Ker(h)\subseteq Ker(g)\subseteq \delta(P)$. So due to
\cite[Corollary 4.92]{Lam} the purity of $Ker(h)$ implies that
$Ker(h) \delta_r=Ker(h)\cap P\delta_r=Ker(h)\cap
\delta(P)=Ker(h)$. But the fact that $Ker(h)$ is locally
projective together with Proposition~\ref{p1} implies that
$\delta(Ker(h))=Ker(h)$. Hence, $Ker(h)$ is projective semisimple
by Proposition~\ref{p3}, which means that $Ker(h)\ll_{\delta} P$.
So $h:P\rightarrow Im(h)$ is a projective $\delta$-cover of
$Im(h)$. We can also observe that $f|_{Im(h)}h=g$, where
$f|_{Im(h)}$ is also a flat $\delta$-cover of $M$. So by
Lemma~\ref{l1}, we get that $Ker(f|_{Im(h)}h)=Ker(g)\ll_{\delta}
P$, as desired.
\end{proof}

Using the idea of the proof of \cite[Theorem 10.5.3]{Kasch} we
obtain the following theorem.
Note that this result can also be used to prove Theorem~\ref{t2}. Indeed, by Proposition~\ref{t4},
the submodule $Ker(h)$ in the proof of Theorem~\ref{t2} is projective semisimple.

\begin{prop}\label{t4} Suppose that $P$ is a projective module, $U\subseteq
\delta(P)$ and $P/U$ is flat. Then $U$ is projective semisimple. In this case,
every finitely generated submodule of $U$ is a direct summand of $P$.
\end{prop}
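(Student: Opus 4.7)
The plan is to unpack the two hypotheses, flatness of $P/U$ and $U\subseteq\delta(P)$, through the propositions already established in this section. Since $P/U$ is flat, $U$ is a pure submodule of $P$ by the characterisation of flat quotients from \cite[Corollary 4.86(1)]{Lam}. The module $P$ is projective, hence locally projective, and a pure submodule of a locally projective module is locally projective by \cite[Proposition 7]{Azumaya}. Thus $U$ is itself locally projective, which unlocks Proposition~\ref{p1} for both $P$ and $U$.

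The core step is to establish $\delta(U)=U$. Applying Proposition~\ref{p1} to the locally projective module $P$ gives $\delta(P)=P\delta_r$, so the hypothesis $U\subseteq\delta(P)$ becomes $U\subseteq P\delta_r$. The purity of $U$ in $P$ then yields, via \cite[Corollary 4.92]{Lam}, the identity $U\delta_r = U\cap P\delta_r = U$. Now Proposition~\ref{p1} applied to the locally projective module $U$ gives $\delta(U)=U\delta_r=U$, and Proposition~\ref{p3} allows us to conclude that $U$ is projective and semisimple.

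For the last assertion, let $V$ be any finitely generated submodule of $U$. Because $U$ is semisimple, $V$ is a direct summand of $U$, and in particular pure in $U$. Purity is transitive (\cite[Examples 4.84(e)]{Lam}), so $V$ is also pure in $P$. A finitely generated pure submodule of a projective module is a direct summand of it by \cite[Theorem 4]{pd}, which gives $V\leq^{\oplus}P$.

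The main obstacle, and the technical heart of the argument, is the identity $U\delta_r = U\cap P\delta_r$; one must apply Lam's purity formula in exactly the sided setting used elsewhere in the paper (e.g.\ in the proof of Theorem~\ref{t2}), so that the combination with Proposition~\ref{p1} for both $P$ and $U$ actually yields $\delta(U)=U$. Once that is in place, Proposition~\ref{p3} and the classical pure--summand theorem for finitely generated submodules of projectives do the rest.
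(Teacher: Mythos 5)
Your argument is correct, but it is not the route the paper takes for this proposition. You establish semisimplicity of $U$ by the ``locally projective'' machinery: purity of $U$ in $P$ (from flatness of $P/U$), local projectivity of $U$ (pure submodule of a locally projective module), the identity $U\delta_r=U\cap P\delta_r=U\cap\delta(P)=U$ from \cite[Corollary 4.92]{Lam} together with Proposition~\ref{p1}, and then Proposition~\ref{p3} to conclude that $U$ is projective semisimple. This is precisely the chain of reasoning the paper uses inside the proof of Theorem~\ref{t2} to handle $Ker(h)$, so you have in effect extracted that argument and promoted it to a proof of Proposition~\ref{t4}; the paper explicitly notes that the two results can be used to prove each other, so there is no circularity. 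The paper's own proof of Proposition~\ref{t4} is instead modelled on \cite[Theorem 10.5.3]{Kasch}: it first treats the free case by an elementwise computation --- writing $u=\sum x_ia_i$ in a basis, using purity in the form $U\cap FA=UA$ for the finitely generated left ideal $A=\sum Ra_i$ to deduce $A=\delta_r A$, and then applying Nakayama's Lemma over $R/S_r$ to get $A\subseteq S_r$ and hence $u\in Soc(U)$ --- and then reduces the general projective case to the free case via $F=P\oplus P_1$. Your version is shorter and more conceptual but leans on Propositions~\ref{p1} and~\ref{p3} (hence on the Zimmermann-Huisgen and Azumaya results behind Remark~\ref{r1}); the paper's version is more self-contained at the level of establishing $U=Soc(U)$, though it too invokes local projectivity at the very end to pass from semisimple to projective. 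Your final step (a finitely generated submodule of $U$ is a summand of $U$, hence pure in $P$ by transitivity, hence a summand of $P$ by \cite[Theorem 4]{pd}) coincides with the paper's. One small slip: the implication ``$P/U$ flat $\Rightarrow$ $U$ pure in $P$'' is \cite[Theorem 4.85]{Lam}; \cite[Corollary 4.86(1)]{Lam} is the converse direction (valid when $P$ is flat), so your citation points the wrong way even though the fact you use is correct.
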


\begin{proof} Firstly we will prove the theorem in case $P=F$ is a
free module. Let $\{x_i| i\in I\}$ be a basis of $F$. Take $u\in
U$ and let $u=\sum_{i=1}^n x_i a_i$, where $a_i\in R$. Consider
the finitely generated left ideal $A=\sum_{i=1}^n Ra_i$ of $R$.
Since $F/U$ is flat, $U$ is pure in $F$. So, by \cite[Corollary
4.92]{Lam}, we have that $U\cap FA=UA$. Then $u\in U\cap FA=UA$.
Hence, there exist $u_j\in U$ and $b_j\in A$ such that
$u=\underset{j} \sum u_jb_j$. Since $U\subseteq
\delta(F)=F\delta_r$, we have that $u_j=\underset{k} \sum x_k
c_{jk}$, where $c_{jk}\in \delta_r$. So $u=\underset{i} \sum
x_ia_i=\underset{j} \sum \underset{k} \sum x_k c_{jk} b_j$ gives
us that $a_i=\underset{j} \sum c_{ji} b_j$ implying that
$A=\delta_r A$. Now we can observe that
$\frac{A+S_r}{S_r}=\frac{\delta_r A+S_r}{S_r}=\frac{\delta_r}{S_r}
\frac{A+S_r}{S_r}=J(\frac{R}{S_r}) \frac{A+S_r}{S_r}$. By
Nakayama's Lemma, $A\subseteq S_r$. Then $u\in U\cap FS_r=U\cap
Soc(F)=Soc(U)$. Hence, $U=Soc(U)$. Since $U$ is a pure submodule
of a projective module, it is locally projective. But
semisimplicity of $U$ implies that $U$ is projective.

Now let $P$ be a projective module and $U$ be a submodule of $P$
such that $U\subseteq \delta(P)$ and $P/U$ is flat. Since $P$ is a
direct summand of a free module $F$, $F=P\oplus P_1$ for some
$P_1\leq F$. Consider the natural epimorphism $\pi:F\rightarrow
F/U$. We have that $F/U=\pi(F)=\pi(P)+\pi(P_1)$. Since $U\subseteq
P$, this sum is direct. Also, $\pi(P)=P/U$ and
$\pi(P_1)=(P_1+U)/U\cong P_1$. Hence, $F/U\cong P/U \oplus P_1$ is
flat. By hypothesis, $U\subseteq \delta(P)\subseteq \delta(F)$.
From the proof above $U$ is projective semisimple.

In this case, every submodule of $U$ is a direct summand of $U$
and so a pure submodule of $P$. Because $U$ is a pure submodule of
$P$, every finitely generated submodule of $U$ is a direct summand
of $P$ by \cite[Theorem 4]{pd}.
\end{proof}

By Proposition~\ref{t4}, we obtain the following result which will turn out to be a useful tool
in characterizing $\delta$-semiperfect rings in Section 4.

\begin{prop}\label{p4} If a flat module $F$ has a projective $\delta$-cover,
then so does every finitely generated pure submodule of $F$.
\end{prop}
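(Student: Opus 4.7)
The plan is to pull $N$ back to $P$, obtain a $\delta$-cover of $N$ by a locally projective module, and then cut it down to a finitely generated piece that is forced to be projective.

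Write $p:P\to F$ for the given projective $\delta$-cover of $F$ and set $K=\ker(p)$. Since $K\ll_{\delta}P$ implies $K\subseteq\delta(P)$ and $P/K\cong F$ is flat, Proposition~\ref{t4} applies and gives that $K$ is projective semisimple. Now let $Q=p^{-1}(N)$; then $p|_{Q}:Q\to N$ is surjective with kernel $K$, and $P/Q\cong F/N$. Because $N$ is pure in the flat module $F$, the quotient $F/N$ is flat, so $Q$ is pure in $P$ and is therefore locally projective by \cite[Proposition~7]{Azumaya}.

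Next I would verify that $p|_{Q}$ is already a $\delta$-cover. Since $K$ is projective semisimple and every projective simple module is a direct summand of $R_{R}$ and hence nonsingular, $K$ is nonsingular semisimple as an abstract module. By the remark following the definition of $\delta$-small, any nonsingular semisimple submodule is $\delta$-small in the ambient module, so $K\ll_{\delta}Q$. Thus $p|_{Q}:Q\to N$ is a $\delta$-cover of the finitely generated module $N$, and Lemma~\ref{b5} produces a finitely generated direct summand $Q'$ of $Q$ such that $p|_{Q'}:Q'\to N$ is again a $\delta$-cover.

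It remains to observe that $Q'$ is projective. As a direct summand of the locally projective module $Q$, it is itself locally projective, and a finitely generated locally projective module is projective by Remark~\ref{r1}(4). Hence $p|_{Q'}:Q'\to N$ is the desired projective $\delta$-cover of $N$. The main obstacle is that the natural candidate $Q$ is only locally projective, not projective; Lemma~\ref{b5} is the key tool that lets us replace $Q$ by a finitely generated summand on which local projectivity upgrades to genuine projectivity.
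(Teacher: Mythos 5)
Your proof is correct, and it shares the paper's overall skeleton: both arguments pull the pure submodule back to its preimage under the projective $\delta$-cover $p:P\to F$, invoke Proposition~\ref{t4} to see that $\ker(p)$ is projective semisimple (hence nonsingular semisimple, hence $\delta$-small in the preimage), and use purity of $N$ in the flat module $F$ to conclude that the preimage is pure in $P$. Where you genuinely diverge is in how projectivity of the covering module is extracted. The paper lifts generators of $N$ to elements $p_1,\dots,p_n\in P$, decomposes $T=p^{-1}(N)$ as $\sum_{i}p_iR\oplus Y$ with $Y$ projective semisimple, and observes that $\sum_i p_iR$ is a finitely generated pure submodule of the projective module $P$, hence a direct summand of $P$ by \cite[Theorem 4]{pd}; this makes the entire preimage $T$ projective, and the cover produced is $p|_T:T\to N$. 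You instead invoke Lemma~\ref{b5} to replace $Q=p^{-1}(N)$ by a finitely generated direct summand $Q'$ (unwinding the proof of Lemma~\ref{b5}, this $Q'$ is essentially the paper's $\sum_i p_iR$), and then upgrade $Q'$ from locally projective to projective via Azumaya's theorem that countably generated locally projective modules are projective (Remark~\ref{r1}(4)). Both key facts are already quoted in the paper's preliminaries, so each route is legitimate; the paper's argument via \cite[Theorem 4]{pd} yields the marginally stronger conclusion that the full preimage $T$ is itself projective, while yours trades that for a cleaner appeal to local projectivity and avoids the explicit generator-lifting computation. One small point worth tightening: a direct summand of $R_R$ is not nonsingular in general, so your claim that a projective simple module is nonsingular needs the observation that for an idempotent $e$ one has $\mathrm{ann}_r(e)\cap eR=0$, whence $e\notin Z(R_R)$ and the simple module $eR$, being either singular or nonsingular, must be nonsingular; the paper elides this step entirely, so you are in good company.
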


\begin{proof} Let $f:P\rightarrow F$ be a projective $\delta$-cover of $F$.
Then $Ker(f)$ is projective semisimple by Proposition~\ref{t4}.
Consider a finitely generated pure submodule $L=\sum_{i=1}^n x_iR$
of $F$. Since $f$ is epic, there exists $p_i\in P$ such that
$f(p_i)=x_i$ for each $i=1,\ldots,n$. So $\sum_{i=1}^n
p_iR\subseteq T=f^{-1}(L)$. To show that $T=Ker(f)+\sum_{i=1}^n
p_iR$, let $t\in T$. Then $f(t)=\sum_{i=1}^n x_ir_i=f(\sum_{i=1}^n
p_ir_i)$ ($r_i\in R$) which gives that $t-\sum_{i=1}^n p_ir_i\in
Ker(f)$. Hence, we get the desired equality. As $Ker(f)$ is
projective semisimple $Ker(f)\ll_{\delta} T$ so that
$T=\sum_{i=1}^n p_iR\oplus Y$, where $Y$ is a projective
semisimple submodule of $Ker(f)$. On the other hand, because $F$
is flat and $L$ is pure $P/T\cong F/L$ is flat which means that
$T$ is a pure submodule of $P$. It follows that $\sum_{i=1}^n
p_iR$ is also a pure submodule of $P$ and so it is projective by
\cite[Theorem 4]{pd}. So, $T$ is projective. Thus,
$f|_T:T\rightarrow L$ is a projective $\delta$-cover of $L$.
\end{proof}
It is known that if a flat module has a projective cover, then it is projective. But, as the
following example shows,
this is not the case for a flat module which has a projective $\delta$-cover even if the flat module is cyclic.

\begin{ex}\label{e2}{\rm \cite[Example 4.1]{zhou} Let $Q=\prod_{i=1}^{\infty} F_i$, where
each $F_i=\mathbb{Z}_2$. Let $R$ be the subring of $Q$ generated
by $S=\oplus_{i=1}^{\infty} F_i$ and $1_Q$. Consider the singular
simple $R$-module $R/S$. Since $R$ is a (von Neumann) regular
ring, $R/S$ is a flat $R$-module. Zhou shows that $R$ is a
$\delta$-semiperfect ring so that $R/S$ has a projective
$\delta$-cover. If $R/S$ was projective, then $R$ would be
semisimple, which is a contradiction.}
\end{ex}

On the other hand, we obtain the following result.

\begin{prop}\label{fp} Let $R$ be a ring with a finitely generated right socle $S_r$. If $F$ is a finitely generated
flat module with a projective $\delta$-cover, then it is projective.
\end{prop}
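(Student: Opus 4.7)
The plan is to show that the kernel of a projective $\delta$-cover of $F$ is a direct summand, which forces $F$ to be projective. Let $f:P\to F$ be the given projective $\delta$-cover, so $\mathrm{Ker}(f)\ll_{\delta} P$ and in particular $\mathrm{Ker}(f)\subseteq \delta(P)$.

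First I would reduce to the case where $P$ itself is finitely generated: since $F$ is finitely generated, Lemma~\ref{b5} supplies a finitely generated direct summand $P'\leq^{\oplus} P$ such that $f|_{P'}:P'\to F$ remains a $\delta$-cover, which is obviously still projective. So replace $P$ by $P'$ and assume $P$ is finitely generated projective.

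Next I would show $\mathrm{Ker}(f)$ is finitely generated. Because $F=P/\mathrm{Ker}(f)$ is flat and $\mathrm{Ker}(f)\subseteq \delta(P)$, Proposition~\ref{t4} gives that $\mathrm{Ker}(f)$ is projective semisimple; in particular $\mathrm{Ker}(f)\subseteq \mathrm{Soc}(P)$. Since $P$ is projective, hence locally projective, Proposition~\ref{p2} gives $\mathrm{Soc}(P)=PS_r$. With $P$ finitely generated and $S_r$ finitely generated as a right ideal of $R$, the submodule $PS_r$ is finitely generated, and so its submodule $\mathrm{Ker}(f)$ is finitely generated as well.

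To finish, I would invoke purity. Since $P$ is flat and $P/\mathrm{Ker}(f)=F$ is flat, \cite[Corollary 4.86(1)]{Lam} says $\mathrm{Ker}(f)$ is a pure submodule of $P$. Being a finitely generated pure submodule of a projective module, \cite[Theorem 4]{pd} makes $\mathrm{Ker}(f)$ a direct summand of $P$, hence $F\cong P/\mathrm{Ker}(f)$ is a direct summand of $P$, hence projective. The only non-routine step is the reduction yielding finite generation of $\mathrm{Ker}(f)$, which crucially needs both the hypothesis that $S_r$ is finitely generated and the reduction to finitely generated $P$ via Lemma~\ref{b5}; the remainder is a direct application of Proposition~\ref{t4} together with the standard purity/direct summand result.
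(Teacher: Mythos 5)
Your proof is correct and follows essentially the same route as the paper: reduce to finitely generated $P$ via Lemma~\ref{b5}, apply Proposition~\ref{t4} to place $\mathrm{Ker}(f)$ inside the finitely generated semisimple module $\mathrm{Soc}(P)=PS_r$ (so it has finite length and is finitely generated), and then conclude via purity and \cite[Theorem 4]{pd} that $\mathrm{Ker}(f)$ is a direct summand. The only cosmetic difference is that you cite Proposition~\ref{p2} explicitly for $\mathrm{Soc}(P)=PS_r$, which the paper leaves implicit.
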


\begin{proof} Let $f:P\rightarrow F$ be a projective $\delta$-cover of a finitely generated
flat module $F$. Then, by Lemma~\ref{b5}, we can assume that $P$
is also finitely generated. By Theorem~\ref{t4}, $Ker(f)$ is
projective semisimple so that $Ker(f)\subseteq Soc(P)=PS_r$.
$Soc(P)$ being finitely generated implies that $Ker(f)$ is
finitely generated. But $Ker(f)$ is a pure submodule of $P$.
Hence, $Ker(f)\leq^{\oplus} P$ by \cite[Theorem 4]{pd}. Thus, $F$
is projective.
\end{proof}

As Example~\ref{e2} shows, the condition that `$S_r$ is finitely generated'
is not superfluous in Proposition~\ref{fp}.

\section{Generalized $\delta$-perfect rings}

\begin{defn} {\rm A ring $R$ is said to be {\em right generalized $\delta$-perfect}
 ({\em right $G$-$\delta$-perfect}, for short) if every right $R$-module has a flat $\delta$-cover.
  Left $G$-$\delta$-perfect rings are defined similarly. We call $R$ a {\em
  $G$-$\delta$-perfect}
ring in case it is both right and left $G$-$\delta$-perfect.}
\end{defn}

We start this section with some examples.

\begin{ex}{\rm Trivially, every flat module has a flat $\delta$-cover. Hence, every regular ring is $G$-$\delta$-perfect.}
\end{ex}

\begin{ex} {\rm A right $\delta$-perfect ring is a right
$G$-$\delta$-perfect ring. The converse need not be true as Example~\ref{e1} shows.}
\end{ex}

\begin{ex} {\rm $\mathbb{Z}$ is not a $G$-$\delta$-perfect ring.}
\end{ex}

\begin{proof} Let $n\geq 2$ and consider the $\mathbb{Z}$-module $M=\mathbb{Z}/n\mathbb{Z}$.
Assume that $f:F\rightarrow M$ is a flat
$\delta$-cover of $M$. From the proof of Lemma~\ref{b4}
 we get that M has a flat $\delta$-cover of the form $\mathbb{Z}/K$ which is
isomorphic to $F$ because projective semisimple
$\mathbb{Z}$-modules are zero. So $\mathbb{Z}/K$ is a cyclic flat
$\mathbb{Z}$-module. But it is projective since $\mathbb{Z}$ is
Noetherian.
 Then $K\leq^{\oplus} \mathbb{Z}$. As $K\neq \mathbb{Z}$ we obtain that $K=0$. So $F\cong \mathbb{Z}$.
Let $g:F\rightarrow \mathbb{Z}$ be the isomorphism. Since
$Ker(f)\ll_{\delta} F$, $g(Ker(f))\ll_{\delta} \mathbb{Z}$ by
\cite[Lemma 1.3(2)]{zhou}. Since $\delta(\mathbb{Z})=0$ and $g$ is
an isomorphism, we have that $Ker(f)=0$. So $f$ is an isomorphism
which means that $M\cong \mathbb{Z}$.
 But this is a contradiction. Thus, M does not have a flat $\delta$-cover.
\end{proof}

\begin{ex} \label{e3}{\rm Let $\mathbb{Q}$ be the set of rational numbers.
Since $\mathbb{Q}$ is a flat $\mathbb{Z}$-module and
$\mathbb{Z}\ll_{\delta} \mathbb{Q}$, the natural epimorphism
$\pi:\mathbb{Q}\rightarrow \mathbb{Q}/\mathbb{Z}$ is a flat
$\delta$-cover of $\mathbb{Q}/\mathbb{Z}$. But it can be shown by
a proof similar to that of \cite[Example 2.1(d)]{AAES} that its
direct summand $\mathbb{Z}_{p^{\infty}}$ (the Prufer $p$-group)
does not have a flat $\delta$-cover.}
\end{ex}

Example~\ref{e3} shows that a submodule of a module which has a flat $\delta$-cover need
not have a flat $\delta$-cover. However, we have the following result.

\begin{prop} Let $R$ be a ring such that $\delta(M)=M\delta_r\ll_{\delta} M$ for any flat module $M$. Assume that
$L/K$ is a flat module, where $K\subseteq L$. If $L$ has a flat $\delta$-cover, then so does $K$.
\end{prop}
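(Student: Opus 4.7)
The plan is to start from a given flat $\delta$-cover $f:F\to L$ and cut it down to a flat $\delta$-cover of $K$ by taking the preimage. Set $G=f^{-1}(K)$. Then $f|_G:G\to K$ is surjective with $\ker(f|_G)=\ker(f)$, so the whole game is to verify that $G$ is flat and that $\ker(f)\ll_{\delta}G$.

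For flatness of $G$, I would use the isomorphism $F/G\cong L/K$ coming from the third isomorphism theorem (applied to $F/\ker(f)\cong L$). Since $L/K$ is flat by hypothesis, $F/G$ is flat, so $G$ is pure in $F$ by \cite[Theorem 4.85]{Lam}. Because $F$ is flat and $G$ is pure, the sequence $0\to G\to F\to F/G\to 0$ is pure exact with flat outer terms, and a standard $\mathrm{Tor}$ argument (or direct reference to results stated in the paragraph just before Theorem~\ref{t2}) gives that $G$ is flat.

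For $\delta$-smallness, I would first observe the elementary fact that $\ll_{\delta}$ passes to submodules: if $A\subseteq B$ with $B\ll_{\delta}M$, and $A+X=M$ with $M/X$ singular, then $B+X=M$ gives a contradiction, so $A\ll_{\delta}M$. Given this, it suffices to show $\ker(f)\subseteq\delta(G)$ while knowing that $\delta(G)\ll_{\delta}G$. The latter is immediate from the standing hypothesis applied to the flat module $G$. For the inclusion, $\ker(f)\ll_{\delta}F$ yields $\ker(f)\subseteq\delta(F)=F\delta_r$ by hypothesis. Since $G$ is pure in $F$, \cite[Corollary 4.92]{Lam} gives $G\cap F\delta_r=G\delta_r$, and therefore
\[
\ker(f)=\ker(f)\cap G\subseteq G\cap F\delta_r=G\delta_r=\delta(G).
\]
Combining with $\delta(G)\ll_{\delta}G$ yields $\ker(f)\ll_{\delta}G$, so $f|_G:G\to K$ is a flat $\delta$-cover of $K$.

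The only subtle point is the interplay between purity and the $\delta_r$-action: one needs $G\cap F\delta_r=G\delta_r$, which is exactly where purity (delivered by flatness of $L/K$) is indispensable. Everything else is bookkeeping with the definitions of $\delta$-small and the preimage construction, so I do not expect any genuine obstacle beyond recording these standard facts about pure submodules in the correct order.
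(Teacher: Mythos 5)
Your proposal is correct and follows essentially the same route as the paper: take $P=f^{-1}(K)$, deduce flatness of $P$ from $F/P\cong L/K$ being flat, use purity to get $\mathrm{Ker}(f)\subseteq F\delta_r\cap P=P\delta_r=\delta(P)$, and conclude via the standing hypothesis $\delta(P)\ll_{\delta}P$. The only cosmetic difference is that you make explicit the two small facts the paper leaves implicit (that a submodule of a $\delta$-small submodule is $\delta$-small, and that $\mathrm{Ker}(f)\ll_{\delta}F$ forces $\mathrm{Ker}(f)\subseteq\delta(F)$).
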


\begin{proof} Assume that $f:F\rightarrow L$ is a flat $\delta$-cover of $L$ and $K\subseteq L$.
Let $P=f^{-1}(K)$. Then $F/P\cong L/K$ is flat and so $P$ is flat by \cite[Corollary 4.86]{Lam}.
Also, we have that $Ker(f)\subseteq F\delta_r\cap P=P\delta_r$ since $P$ is a pure submodule of $F$. By assumption,
$P\delta_r\ll_{\delta} P$ and so $Ker(f)\ll_{\delta} P$. Hence, we obtain that $f:P\rightarrow K$ is a flat
$\delta$-cover of $K$.
\end{proof}

Now we consider some basic properties of right $G$-$\delta$-perfect rings.

\begin{prop}\label{ph} $1)$ Being a right $G$-$\delta$-perfect ring is a Morita invariant.

$2)$ The class of right $G$-$\delta$-perfect rings is closed under taking quotient rings.

$3)$ The class of right $G$-$\delta$-perfect rings is closed under finite direct product of rings.
\end{prop}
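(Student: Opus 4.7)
The three parts of this proposition call for different techniques, so the plan is to prove them separately in the order (3), (1), (2), starting with the easiest.

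For (3), I would write $R = R_1 \times R_2$ with central idempotents $e_1, e_2$, decompose any right $R$-module as $M = Me_1 \oplus Me_2$, and take a flat $\delta$-cover $f_i : F_i \to Me_i$ over $R_i$ for each $i$ by assumption. A short argument using the idempotent decomposition of $-\otimes_R F_i$ shows that $F_i$ is also flat over $R$, and a direct check via Lemma~\ref{lz} transfers $\delta$-smallness from $R_i$ to $R$ (the submodule lattice of $F_i$ and the notion of singular quotient coincide under the two module structures, and a projective semisimple $R_i$-module remains projective semisimple over $R$ since $R_i$ is a summand of $R$). Lemma~\ref{b1} then assembles $f_1 \oplus f_2 : F_1 \oplus F_2 \to M$ into a flat $\delta$-cover.

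For (1), I would exploit that the characterization of $\delta$-smallness in Lemma~\ref{lz} refers only to direct summands, projectivity, and semisimplicity, all categorical notions, so $\delta$-smallness transports along any Morita equivalence. Flatness is likewise Morita invariant, since by Lazard's theorem flat modules are exactly the direct limits of finitely generated projective modules, and equivalences of module categories preserve both direct limits and finitely generated projective objects. Hence an equivalence $\mathrm{Mod}\text{-}R \simeq \mathrm{Mod}\text{-}S$ carries flat $\delta$-covers to flat $\delta$-covers, and right $G$-$\delta$-perfectness is Morita invariant.

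For (2), let $I \trianglelefteq R$ and let $M$ be a right $R/I$-module, viewed as an $R$-module with $MI = 0$. Fix a flat $\delta$-cover $f : F \to M$ over $R$; since $MI = 0$ we have $FI \subseteq \ker f$, and $f$ descends to $\bar f : F/FI \to M$, whose source $F \otimes_R R/I$ is flat over $R/I$. The key technical step, which I expect to be the main obstacle, is to verify $\ker(f)/FI \ll_\delta F/FI$ as $R/I$-modules. Using Lemma~\ref{lz}, suppose $X/FI + \ker(f)/FI = F/FI$ with singular $R/I$-quotient. I would first check that an essential right ideal of $R/I$ pulls back to an essential right ideal of $R$, which forces $F/X$ to be singular over $R$ as well; then $\ker f \ll_\delta F$ over $R$ yields $F = X \oplus Y$ with $Y$ a projective semisimple $R$-module inside $\ker f$. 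Replacing $Y$ by $(Y+FI)/FI \cong Y/YI$ should give the required complementary summand in $F/FI$. The crux, and the real obstacle, is showing that $Y/YI$ is still projective semisimple as an $R/I$-module: I would write $Y = \bigoplus e_\alpha R$ with each $e_\alpha$ idempotent and observe that each simple summand is either killed (when $e_\alpha I = e_\alpha R$) or persists as the summand $e_\alpha(R/I)$ of $R/I$. The routine but slightly fiddly verifications that $(Y+FI)/FI$ sits inside $\ker(f)/FI$, that the sum with $X/FI$ is direct, and that $Y\cap FI = YI$ all fall out from $MI=0$ and $F = X\oplus Y$.
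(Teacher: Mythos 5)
Your proposal is correct and its overall strategy matches the paper's, but the way you justify the transfer of $\delta$-smallness across the change of rings is genuinely different in parts (1) and (2), so a comparison is worthwhile. For (3) you argue essentially as the paper does (central idempotents, flatness of $F_i$ over $R$, Lemma~\ref{b1}), except that the paper transfers $\delta$-smallness by multiplying the equation $F=K+T$ by $e$ and using singularity, while you invoke condition (2) of Lemma~\ref{lz}; both work. For (1) the paper reformulates ``$\delta$-small kernel'' in terms of homomorphisms with singular cokernel (in the spirit of Propositions 5.14 and 5.15 of \cite{AF}) and then cites the preservation of flatness and singularity under category equivalences, whereas you observe that condition (2) of Lemma~\ref{lz} is stated purely in terms of subobjects, direct summands, projectivity and semisimplicity and is therefore manifestly categorical, and you handle flatness via Lazard's theorem; your route is more self-contained, the paper's is shorter. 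For (2) the paper disposes of $\delta$-smallness in one line by citing \cite[Lemma 1.3(2)]{zhou} (which gives $\mathrm{Ker}(f)/FI\ll_{\delta}F/FI$ as $R$-modules, leaving the passage to $R/I$-modules implicit), while you carry out that passage explicitly; this is a legitimate point to check, and your verification that essential right ideals of $R/I$ pull back to essential right ideals of $R$ (hence singular $R/I$-modules are singular $R$-modules) is sound. The only miscalibration is that the step you flag as the crux is automatic: since $FI\subseteq X$ and $F=X\oplus Y$, one gets $YI\subseteq X\cap Y=0$, so $Y$ is already an $R/I$-module, and a projective $R$-module killed by $I$ is projective over $R/I$ because $Y\cong Y\otimes_R R/I$ is a direct summand of a free $R/I$-module; no case analysis on idempotents is needed. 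Indeed you can bypass $Y$ altogether by using condition (1) of the definition of $\delta$-small: $X+\mathrm{Ker}(f)=F$ with $F/X$ singular over $R$ already forces $X=F$.
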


\begin{proof} $1)$ Similar to \cite[Proposition 5.14]{AF} we can easily observe that
$K\ll_\delta M$ if and only if for every module $N$ and for every
homomorphism $h:N\rightarrow M$ $Im(h)+K=M$ with $M/Im(h)$
singular implies that $Im(h)=M$. As a consequence of this result
(similar to \cite[Corollary 5.15]{AF}) we get that an epimorphism
$g:M\rightarrow N$ has a $\delta$-small kernel if and only if for
all homomorphism $h$ with $M/Im(h)$ singular if $gh$ is epic, then
$h$ is epic. Combining this fact with \cite[Exercise 18.2,
pg.501]{Lam} and with \cite[Lemma 21.3]{AF} we obtain that the
property that `having a $\delta$-cover' is preserved under a
category equivalence. Hence, by \cite[Exercise 22.12, pg.268]{AF},
we get the desired result.

$2)$ Let $I$ be an ideal of a right $G$-$\delta$-perfect ring $R$.
Consider a right $R/I$-module $M$. By hypothesis, $M$ has a flat
$\delta$-cover $f:F\rightarrow M$ as an $R$-module. Since
$f(FI)=0$, we can consider the epimorphism
$\overline{f}:F/FI\rightarrow M$ which is induced by $f$.
Moreover, this epimorphism is a flat $\delta$-cover of the
$R/I$-module $M$ because $F/FI\cong F\otimes_R R/I$ is a flat
$R/I$-module and $Ker(\overline{f})=Ker(f)/FI\ll_{\delta} F/FI$ by
\cite[Lemma 1.3(2)]{zhou}.

$3)$ It is enough to prove that $R=R_1 \times R_2$ is a right
$G$-$\delta$-perfect ring whenever the rings $R_1$ and $R_2$ are
right $G$-$\delta$-perfect. Let $M$ be a right $R$-module. If we
consider the central idempotent $e=(1,0)\in R$, then $M=Me\oplus
M(1-e)$. Since $Me$ has an $R_1$-module structure, it has a flat
$\delta$-cover $f:F\rightarrow Me$ as an $R_1$-module. $F$ is also
a flat $R$-module by \cite[Theorem 4.24]{Lam}. Let $K=Ker(f)$. To
show that $K\ll_{\delta} F$ as an $R$-module, let $F=K+T$, where
$F/T$ is singular. Then $Fe=Ke+Te$ as an $R_1$-module and $Fe/Te$
is a singular $R_1$-module. But $K\ll_{\delta} F$ as an
$R_1$-module so that $Ke\ll_{\delta} Fe$. Hence, $Fe=Te$ which
means that $F=T$. Similarly, it can be shown that $M(1-e)$ has a
flat $\delta$-cover as an $R$-module. Thus, $M$ has a flat
$\delta$-cover by Lemma~\ref{b1}.
\end{proof}

\begin{ex}\label{e1} There exists a right $G$-$\delta$-perfect ring that is not
right $\delta$-perfect.
\end{ex}

\begin{proof} Consider a non-semisimple regular ring $R$ with
$\delta_r=0$ and a right $\delta$-perfect ring $S$ that is not
regular (For examples of such rings see \cite[Examples 4.2 and
4.3]{zhou}). Then the ring $R\times S$ is right
$G$-$\delta$-perfect by Proposition~\ref{ph}(3), but it is not
right $\delta$-perfect since $(R\times S)/\delta(R\times S)\cong
R\times S/\delta(S)$ is not semisimple. Note also that $R\times S$
is not regular.
\end{proof}

Recall that a subset $S$ of a ring $R$ is said to be {\em right $T$-nilpotent} in case
for every sequence $a_1,a_2,\ldots$ in $S$ there is an integer $n\geq 1$ such that
$a_n\ldots a_2a_1=0$. The following theorem describes the right $T$-nilpotency of
$J(R/S_r)$.

\begin{thm}\label{t1} The following statements are equivalent:

1) $J(R/S_r)$ is right $T$-nilpotent.

2) $\delta(M)\ll_{\delta} M$ for every (non-semisimple) projective
module $M$.

3) $\delta(F)\ll_{\delta} F$ for every countably generated
(non-semisimple) free module $F$.
\end{thm}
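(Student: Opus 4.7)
I would argue the chain $(2)\Rightarrow(3)\Rightarrow(1)\Rightarrow(2)$, with $(2)\Rightarrow(3)$ being immediate since every countably generated free module is projective.

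For $(1)\Rightarrow(2)$, the idea is to reduce modulo $S_r$ and apply the classical theorem of Bass: an ideal $J$ of a ring is right $T$-nilpotent iff $NJ\ll N$ for every right module $N$. Let $M$ be projective, so that $\delta(M)=M\delta_r$ and $Soc(M)=MS_r$ by Propositions~\ref{p1} and~\ref{p2}. Setting $\bar R=R/S_r$ and $\bar M=M/MS_r$, the hypothesis on $J(\bar R)=\delta_r/S_r$ gives $\bar M\cdot J(\bar R)\ll\bar M$, i.e.\ $M\delta_r/MS_r\ll M/MS_r$. Now suppose $X+\delta(M)=M$; reducing mod $MS_r$ and invoking this smallness yields $X+MS_r=M$. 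Because $Soc(M)=MS_r$ is semisimple, decomposing $Soc(M)=(Soc(M)\cap X)\oplus Y$ produces $M=X\oplus Y$ with $Y\subseteq Soc(M)\subseteq\delta(M)$; the summand $Y$ is projective semisimple, so Lemma~\ref{lz} delivers $\delta(M)\ll_\delta M$.

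For $(3)\Rightarrow(1)$ I would argue by contrapositive via a Bass-type construction. Assuming $J(R/S_r)$ is not right $T$-nilpotent, choose $a_1,a_2,\ldots\in\delta_r$ whose images in $R/S_r$ never multiply to $0$, equivalently $a_n\cdots a_1\notin S_r$ for every $n$. Let $F$ be free on $\{x_i\}_{i\ge 1}$, put $y_i=x_i-x_{i+1}a_i$ and $K=\sum_i y_iR$. From $x_n=y_n+x_{n+1}a_n$ one gets $F=K+F\delta_r=K+\delta(F)$. To witness the failure of $\delta(F)\ll_\delta F$, I would take the proper-looking candidate $X=K+FS_r$; then still $F=X+\delta(F)$. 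If $\delta(F)\ll_\delta F$ were to hold, Lemma~\ref{lz} would supply $F=X\oplus Y$ with $Y$ projective semisimple and $Y\subseteq\delta(F)$; semisimplicity of $Y$ forces $Y\subseteq Soc(F)=FS_r\subseteq X$ (using Proposition~\ref{p2}), so $Y=0$ and $F=K+FS_r$. The contradiction would then come by passing to $\bar R=R/S_r$, where $\bar F=F/FS_r$ is free over $\bar R$ on the $\bar x_i$ and $\bar K$ is generated by $\bar y_i=\bar x_i-\bar x_{i+1}\bar a_i$, and running the standard Bass linear-dependence calculation: a hypothetical identity $\bar x_1=\sum_{i\le m}\bar y_i\bar r_i$ with $\bar r_m\ne 0$ forces, by comparing coefficients at $\bar x_{m+1},\bar x_m,\ldots,\bar x_1$ in turn, the relations $\bar r_1=1$, $\bar r_i=\bar a_{i-1}\bar r_{i-1}$ for $1<i\le m$, and $\bar a_m\bar r_m=0$, hence $\bar a_m\bar a_{m-1}\cdots\bar a_1=0$, contradicting the choice of the $a_i$.

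The main obstacle is the $(3)\Rightarrow(1)$ direction: one must engineer the witness $X$ so that the projective-semisimple complement $Y$ demanded by Lemma~\ref{lz} is trapped inside $Soc(F)$, which is exactly why adding $FS_r$ to $K$ in the definition of $X$ is essential (otherwise $Y$ could sit anywhere in $\delta(F)$ and the Bass argument would not close). Once that setup is in place, the core computation reduces cleanly to the classical Bass linear-dependence argument inside $R/S_r$, but one still needs the two projectivity consequences $Soc(F)=FS_r$ and $F/FS_r\cong(R/S_r)^{(\mathbb{N})}$ to transport the argument across the socle quotient.
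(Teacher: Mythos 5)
Your proof is correct and follows essentially the same route as the paper: both arguments run the cycle $(1)\Rightarrow(2)\Rightarrow(3)\Rightarrow(1)$ and rest on the classical Bass machinery — \cite[Lemma 28.3]{AF} together with $\delta(M)=M\delta_r$ and $Soc(M)=MS_r$ transported across $R/S_r$ for $(1)\Rightarrow(2)$, and the free-module construction $G=\sum(x_i-x_{i+1}a_i)R$ for $(3)\Rightarrow(1)$. The only substantive difference is in the mechanics of $(3)\Rightarrow(1)$: the paper applies Lemma~\ref{lz} to $G$ itself to get $F=G\oplus Y$ and then invokes \cite[Lemma 28.2]{AF} plus the fact that $1-ra_{n+1}$ is a unit modulo $S_r$, whereas you enlarge the test submodule to $K+FS_r$ so that the complement $Y$ is trapped in $Soc(F)$ and vanishes, and then redo the linear-dependence computation by hand in $R/S_r$ — both versions are sound.
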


\begin{proof} $(1)\Rightarrow (2)$ Let $M=\delta(M)+K$ with $M/K$
singular for a proper submodule $K$ of a projective module $M$. Since $M$
is projective, $K\leq_e M$ which implies that
$Soc(M)=MS_r\subseteq K$. So $\frac{M}{K}$ is a nonzero right
$\frac{R}{S_r}$-module. But by \cite[Lemma 28.3(b)]{AF},
$\frac{M}{K} J(\frac{R}{S_r})\neq \frac{M}{K}$ which means that
$\frac{M}{K} \frac{\delta_r}{S_r}\neq \frac{M}{K}$. On the other
hand, $\frac{M}{K} \frac{\delta_r}{S_r}=\frac{(M\delta_r+K)}{K}
\frac{R}{S_r}=\frac{M}{K}$, a contradiction. Consequently, $M=K$.

$(2)\Rightarrow (3)$ It is obvious.

$(3)\Rightarrow (1)$ It follows from a proof similar to that of
$(4)\Rightarrow (1)$ of Theorem 3.7 in \cite{zhou}. We give the
proof for completeness. Let $F\cong R^{(\aleph_0)}$ be the free
module with a basis $\{x_1, x_2, \ldots,\}$. If we let $a_1,a_2,
\ldots$ be a sequence in $\delta_r$ and $G=\sum_{i=1}^{\infty}
(x_i-x_{i+1}a_i)R$, then $F=G+\delta(F)$. By hypothesis,
$\delta(F)\ll_{\delta} F$. So $F=G\oplus Y$ for a semisimple
submodule $Y$ of $\delta(F)$ by Lemma~\ref{lz}. It follows from
\cite[Lemma 28.2]{AF} that there exists a number $n$ such that
$Ra_{n+1}a_n\cdots a_1=Ra_n\cdots a_1$. So $a_n\cdots
a_1=ra_{n+1}a_n\cdots a_1$ which implies that
$(1-ra_{n+1})a_n\cdots a_1=0$. Since $J(R/S_r)= \delta_r/S_r$ (see
\cite[Corollary 1.7]{zhou}), $a_n \cdots a_1\in S_r$ which means
that $J(R/S_r)$ is right $T$-nilpotent.
\end{proof}

In \cite{AAES}, it is shown that if $R$ is a right $G$-perfect
ring, then $J(R)$ is right $T$-nilpotent. But it is evident from
\cite[Example 4.3]{zhou} that $\delta_r$ need not be right
$T$-nilpotent whenever $R$ is a right $G$-$\delta$-perfect ring.
However, considering the characterization of $\delta$-perfect
rings (see \cite[Theorem 3.8]{zhou}), it is natural to expect the
following result.

\begin{thm}\label{t3} If $R$ is a right $G$-$\delta$-perfect ring, then
$J(R/S_r)$ is right $T$-nilpotent. In particular, idempotents lift
modulo $\delta_r$.
\end{thm}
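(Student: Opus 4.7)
My plan is to reduce the $T$-nilpotency statement to condition $(3)$ of Theorem~\ref{t1}, namely that $\delta(F) \ll_{\delta} F$ for every countably generated free module $F$. Fix such an $F$ and let $\pi: F \rightarrow F/\delta(F)$ be the canonical projection; the target becomes showing $\pi$ is a $\delta$-cover. By the $G$-$\delta$-perfect hypothesis, $F/\delta(F)$ admits a flat $\delta$-cover $g: X \rightarrow F/\delta(F)$, and the projectivity of $F$ produces a lift $h: F \rightarrow X$ with $gh = \pi$. Then $X = h(F) + Ker(g)$, and since $Ker(g) \ll_{\delta} X$, Lemma~\ref{lz} splits $X = h(F) \oplus T$ with $T$ projective semisimple and contained in $Ker(g)$; in particular $h(F)$ is a direct summand of the flat module $X$, hence itself flat, and its identity morphism is a trivial flat $\delta$-cover.

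The critical observation is that $Ker(h) \subseteq Ker(\pi) = \delta(F)$, which turns the corestricted map $h: F \rightarrow h(F)$ into a generalized projective $\delta$-cover of a module $h(F)$ that itself has a flat $\delta$-cover. Theorem~\ref{t2} then upgrades $h$ to a genuine projective $\delta$-cover, giving $Ker(h) \ll_{\delta} F$. On the other hand, the summand decomposition of $X$ together with Lemma~\ref{b3} yields $Ker(g) \cap h(F) \ll_{\delta} h(F)$, so $g|_{h(F)}: h(F) \rightarrow F/\delta(F)$ is itself a $\delta$-cover. Applying Lemma~\ref{l1} to the factorization $\pi = g|_{h(F)} \circ h$ shows that $\pi$ is a $\delta$-cover, i.e. $\delta(F) = Ker(\pi) \ll_{\delta} F$, which is exactly what Theorem~\ref{t1}$(3)$ demands. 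Theorem~\ref{t1} then delivers the right $T$-nilpotency of $\delta_r/S_r = J(R/S_r)$.

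For the ``in particular'' clause, once $J(R/S_r)$ is right $T$-nilpotent it is nil, so the standard lifting lemma for nil ideals lifts idempotents from $R/\delta_r = (R/S_r)/(\delta_r/S_r)$ to $R/S_r$, and a further appeal to the lifting behaviour of $S_r$ available in the $\delta$-framework of \cite{zhou} carries the lifts back to $R$. The main obstacle I anticipate is the bookkeeping needed to manufacture $\pi$ as a composition of two $\delta$-covers: the map $h$ is not a priori surjective onto $X$, so one must first peel off the semisimple summand $T$ and work with the corestriction $h: F \rightarrow h(F)$ before Theorem~\ref{t2} becomes applicable and Lemma~\ref{l1} can be invoked to chain the two $\delta$-covers into $\pi$.
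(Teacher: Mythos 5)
Your proof is correct and follows the paper's route: reduce via Theorem~\ref{t1} to showing $\delta(F)\ll_{\delta}F$ for a countably generated free module $F$, then exploit the flat $\delta$-cover of $F/\delta(F)$ together with Theorem~\ref{t2}. The only difference is that the paper applies Theorem~\ref{t2} directly to the natural epimorphism $\pi:F\rightarrow F/\delta(F)$ --- which is already a generalized projective $\delta$-cover of a module that has a flat $\delta$-cover by hypothesis --- so your lift $h$, the splitting of $X$, and the chaining of two $\delta$-covers via Lemmas~\ref{b3} and~\ref{l1} constitute a correct but unnecessary detour.
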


\begin{proof} By Theorem~\ref{t1}, it is enough to show that $\delta(F)\ll_{\delta}
F$ for a countably generated free $R$-module $F$. By assumption,
$F/\delta(F)$ has a flat $\delta$-cover. Also, the natural
epimorphism $\pi:F\rightarrow F/\delta(F)$ is a generalized
projective $\delta$-cover of $F/\delta(F)$. It follows from
Theorem~\ref{t2} that $\pi$ is a projective $\delta$-cover. Hence,
$Ker(\pi)=\delta(F)\ll_{\delta} F$. In particular, idempotents of
the ring $R/S_r$ lift modulo $J(R/S_r)$. By \cite[Lemma 1.3]{YZ},
idempotents of $R$ lift modulo $\delta_r$.
\end{proof}

\begin{rem}{\rm Note that alternatively Theorem~\ref{t3} can also be proved with the
help of Proposition~\ref{t4}. We can consider
$(R/\delta_r)^{\mathbb{(N)}}$ and its flat $\delta$-cover. Then
apply the proof of Lemma~\ref{b4} considering $R^{\mathbb{(N)}}$.
The rest of the proof follows from Proposition~\ref{t4} and
Lemma~\ref{l1}.}
\end{rem}

The next example shows that the notion of $G$-$\delta$-perfect rings is not left-right symmetric.
\begin{ex} There exists a right $G$-$\delta$-perfect ring that is not
left $G$-$\delta$-perfect.
\end{ex}

\begin{proof} Let $R$ be the ring of all countably infinite
square upper triangular matrices over a field $F$ that are
constant on the main diagonal and have only finitely many nonzero
entries off the main diagonal. It is shown in (\cite[Example
B.46]{QF}) that $J(R)$ is not left $T$-nilpotent. So $J(R/S_r)$ is
not left $T$-nilpotent. Hence, $R$ is not
left $G$-$\delta$-perfect by Theorem~\ref{t3}. On the other hand,
$R$ is right $G$-$\delta$-perfect since $R$ is right perfect.
\end{proof}

According to \cite[Theorem 3.5]{zhou}, a ring $R$ is $\delta$-semiregular
if and only if $R/\delta_r$ is regular and idempotents lift modulo $\delta_r$.
B\"{u}y\"{u}ka\c{s}\i k and Lomp prove in \cite{BL} that a $\delta$-semiperfect
ring with a finitely generated right socle is semiperfect. This fact together with
Theorem~\ref{t3} enables us to prove the following result which generalizes
Proposition 2.4 in \cite{AAES}.

\begin{prop} Let $R$ be a right $G$-$\delta$-perfect ring. Then
$R$ is right Noetherian if and only if $R$ is right Artinian.
\end{prop}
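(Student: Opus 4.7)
The direction ``right Artinian $\Rightarrow$ right Noetherian'' is the classical Hopkins--Levitzki theorem, so the content lies entirely in the converse. Assuming $R$ is right Noetherian, my plan combines three ingredients: the fact that a finitely generated flat module over a right Noetherian ring is projective, the B\"{u}y\"{u}ka\c{s}\i k--Lomp theorem \cite{BL} recalled in the paragraph just above the statement, and the right $T$-nilpotency supplied by Theorem~\ref{t3}.

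First I would show that $R$ is $\delta$-semiperfect. Given a simple right $R$-module $S$, the hypothesis provides a flat $\delta$-cover $f:F\to S$, and Lemma~\ref{b5} then yields a cyclic direct summand $F'$ of $F$ for which $f|_{F'}$ is still a $\delta$-cover of $S$. Being a summand of $F$, the module $F'$ is flat; since $R$ is right Noetherian $F'$ is finitely presented, and a finitely presented flat module is projective. Hence every simple right $R$-module has a projective $\delta$-cover, i.e.\ $R$ is $\delta$-semiperfect. Because $R$ is right Noetherian, $S_r$ is finitely generated, so \cite{BL} upgrades $\delta$-semiperfect to semiperfect; in particular $R/J(R)$ is semisimple.

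It remains to show that $J(R)$ is nilpotent. By Theorem~\ref{t3}, $J(R/S_r)=\delta_r/S_r$ is right $T$-nilpotent; specialising the defining condition to a constant sequence shows every element is nilpotent, so $J(R/S_r)$ is a nil two-sided ideal in the right Noetherian ring $R/S_r$. Levitzki's theorem (nil ideals in right Noetherian rings are nilpotent) then gives $\delta_r^{n}\subseteq S_r$ for some $n\geq 1$. Since $S_r$ is semisimple as a right $R$-module, $S_r\cdot J(R)=0$, and from $J(R)\subseteq\delta_r$ one obtains
\[
J(R)^{n+1}\;\subseteq\;\delta_r^{n}\cdot J(R)\;\subseteq\;S_r\cdot J(R)\;=\;0.
\]
Thus $R$ is semiprimary, and a second application of Hopkins--Levitzki (semiprimary plus right Noetherian equals right Artinian) finishes the proof.

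The only step that is not routine bookkeeping is extracting a \emph{projective} $\delta$-cover of each simple module from the hypothesized flat one; this is precisely the point at which the Noetherian hypothesis is indispensable, via Lemma~\ref{b5} together with the fact that finitely presented flat modules are projective. Once that reduction is in place, \cite{BL} and Theorem~\ref{t3} furnish $R/J(R)$ semisimple and $J(R)$ nilpotent with no further difficulty.
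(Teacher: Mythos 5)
Your proposal is correct and follows essentially the same route as the paper: extract a cyclic flat (hence projective, over a Noetherian ring) $\delta$-cover of each simple module via Lemma~\ref{b5} to get $\delta$-semiperfectness, upgrade to semiperfect via \cite{BL}, and combine Theorem~\ref{t3} with Levitzki and Hopkins--Levitzki. The only (immaterial) difference is in the endgame: the paper shows $R/S_r$ is semiprimary and Artinian and then appends the Artinian module $S_r$, whereas you lift the nilpotency $\delta_r^{\,n}\subseteq S_r$ to $J(R)^{n+1}=0$ and conclude $R$ itself is semiprimary; both are valid.
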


\begin{proof} The necessity is obvious. For the sufficiency, let
$M$ be a simple $R$-module. Then, by Lemma~\ref{b5}, $M$ has a
flat $\delta$-cover $f:F\rightarrow M$ such that $F$ is cyclic.
Since finitely generated flat modules are projective over a
Noetherian ring, $F$ is projective. Hence, every simple $R$-module
has a projective $\delta$-cover which means that $R$ is
$\delta$-semiperfect. By \cite[Remark 4.4]{BL}, $R$ is
semiperfect. It follows that $R/S_r$ is semiperfect. Since
$J(R/S_r)$ is nil by Theorem~\ref{t3}, $R/S_r$ is right Noetherian
semiprimary ring. It follows from Hopkin's Theorem that $R/S_r$ is
an Artinian ring and so an Artinian $R$-module. Since $S_r$ is
Artinian, $R$ is right Artinian.
\end{proof}

\begin{thm}\label{r} Let $R$ be a ring such that every cyclic flat right
$R$-module is projective. If $R$ is right $G$-$\delta$-perfect, then
$R/\delta_r$ is regular.
\end{thm}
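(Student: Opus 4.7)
The plan is to deduce that every cyclic right $R/\delta_r$-module is isomorphic to a direct summand of $R/\delta_r$, from which it follows at once that every principal right ideal of $R/\delta_r$ is a direct summand and hence that $R/\delta_r$ is (von Neumann) regular.

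Take any cyclic right $R/\delta_r$-module $M$. Regarded as a right $R$-module, $M$ is still cyclic and satisfies $M\delta_r=0$. Since $R$ is right $G$-$\delta$-perfect, $M$ admits a flat $\delta$-cover $f\colon F\to M$, and by Lemma~\ref{b5} I may replace $F$ by a cyclic direct summand and thereby assume $F$ is itself cyclic. Now $F$ is a cyclic flat right $R$-module, so the standing hypothesis upgrades it to a projective one; write $F\cong eR$ for an idempotent $e\in R$, and note that $f$ is in fact a projective $\delta$-cover of $M$.

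The crux is the identification $Ker(f)=F\delta_r$. Since $Ker(f)\ll_{\delta} F$ and $F$ is projective (hence locally projective), Proposition~\ref{p1} yields $Ker(f)\subseteq \delta(F)=F\delta_r$. Conversely, for any $x\in F$ and $r\in \delta_r$ we have $f(xr)=f(x)r\in M\delta_r=0$, so $F\delta_r\subseteq Ker(f)$. Hence $Ker(f)=F\delta_r$, which gives
$$M\;\cong\;F/F\delta_r\;\cong\;F\otimes_R (R/\delta_r).$$
Because $F$ is projective over $R$, the right-hand side is a projective, hence in particular flat, direct summand of $R/\delta_r$ as an $R/\delta_r$-module, which is exactly what is needed.

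The main obstacle is the two-sided inclusion pinning down $Ker(f)$: the inclusion $Ker(f)\subseteq F\delta_r$ rests on the $\delta$-smallness of $Ker(f)$ together with Proposition~\ref{p1} applied to the projective module $F$, while $F\delta_r\subseteq Ker(f)$ is forced by $M$ being an $R/\delta_r$-module. Everything else is routine: Lemma~\ref{b5} reduces $F$ to a cyclic flat module, the hypothesis on cyclic flat modules promotes $F$ to a projective one, and base change $-\otimes_R R/\delta_r$ sends projectivity over $R$ to projectivity over $R/\delta_r$, from which regularity of $R/\delta_r$ falls out.
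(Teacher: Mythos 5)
Your proof is correct, and it shares the paper's core ingredients --- Lemma~\ref{b5} to replace the flat $\delta$-cover by a cyclic one, the standing hypothesis to promote it to a projective module $eR$, and the identity $Ker(f)\subseteq\delta(F)=F\delta_r$ --- but it packages them differently. The paper first invokes Proposition~\ref{ph}(2) to reduce to the case $\delta_r=0$ and then argues by contradiction: a non-regular ring has a cyclic non-flat module (Lam, Theorem 4.21), whose flat $\delta$-cover would force it to be projective. Your version skips the reduction entirely: you take an arbitrary cyclic $R/\delta_r$-module, view it as an $R$-module killed by $\delta_r$, and pin down $Ker(f)=F\delta_r$ by combining the inclusion $Ker(f)\subseteq F\delta_r$ (from $\delta$-smallness and Proposition~\ref{p1}) with the reverse inclusion forced by $M\delta_r=0$; this exhibits $M\cong F\otimes_R R/\delta_r$ as a projective $R/\delta_r$-module directly. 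What your route buys is that you never need the hypothesis ``every cyclic flat module is projective'' to descend to the quotient ring $R/\delta_r$ --- a point the paper's reduction glosses over, since a cyclic $R/\delta_r$-module that is flat over $R/\delta_r$ need not be flat over $R$. One small polish: from ``every cyclic $R/\delta_r$-module is projective'' you should conclude that a principal right ideal $I$ is a direct summand by applying the statement to $(R/\delta_r)/I$ and splitting the resulting short exact sequence, rather than to $I$ itself (being abstractly isomorphic to a summand is not quite the same as being one); this is the standard step and costs one line.
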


\begin{proof} By Proposition~\ref{ph}(2), it is enough prove that $R$
is regular whenever $\delta_r=0$. Assume that $R$ is not regular.
Then there exists a cyclic right $R$-module $M$ that is not flat
by \cite[Theorem 4.21]{Lam}. But $M$ has a flat $\delta$-cover
$f:F\rightarrow M$ and since $M$ is cyclic we can assume that $F$
is cyclic by Lemma~\ref{b5}. Then $F$ is projective by hypothesis.
Therefore, we get that $Ker(f)\subseteq \delta(F)=F\delta_r=0$.
Thus, $F\cong M$ is projective, which is a contradiction.
\end{proof}

Recall from \cite[pg.297 and 321]{Lam} that a ring $R$ is called
{\em strongly ($\pi$-)regular} if, for any $a\in R$, there exists
$x\in R$ (and a positive integer $n$) such that $a=a^2x$
($a^n=a^{n+1}x$). Recall also that a ring $R$ is said to be {\em right} (resp., {\em left})
{\em duo} in case every right (resp., left) ideal of $R$ is a two-sided ideal.
It is known that a strongly regular ring is
right and left duo. By the next theorem, we can conclude that a
right duo and a right $G$-$\delta$-perfect ring with $J=0$ is
strongly regular. Note also that the next theorem is a generalization of
\cite[Theorem 2.7]{AAES} since strongly regular rings are regular.

\begin{thm}\label{td} If $R$ is right duo and right $G$-$\delta$-perfect, then
$R/J$ is strongly regular.
\end{thm}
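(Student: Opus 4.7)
The plan is to reduce to the case $J=0$ and then show $R$ is strongly regular. Since right duo is inherited by quotient rings and Proposition~\ref{ph}(2) gives the same for the right $G$-$\delta$-perfect property, I replace $R$ by $R/J$ and assume $J=0$ throughout. A right duo regular ring is automatically strongly regular: from $a=axa$, right duo provides $y\in R$ with $xa=ay$, so $a=axa=a(ay)=a^2y$. It therefore suffices to prove that $R$ is regular.

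The first structural step establishes that $R$ is reduced. If $a^2=0$, right duo gives $Ra\subseteq aR$, so $(aR)^2\subseteq a(Ra)\subseteq a(aR)=a^2R=0$; then $aR$ is a nilpotent right ideal, hence contained in $J=0$, forcing $a=0$. Consequently every idempotent is central (the decomposition $R=eR\oplus(1-e)R$ is into two-sided ideals), one-sided annihilators are two-sided, and $R/S_r$ is also reduced: if $x^2\in S_r$, pick a central idempotent $e\in S_r$ with $x^2=ex^2$; then $(x(1-e))^2=0$, so $x(1-e)=0$ and $x\in S_r$. Theorem~\ref{t3} now says $\delta_r/S_r=J(R/S_r)$ is right $T$-nilpotent, in particular nil, and hence zero in the reduced ring $R/S_r$. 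Therefore $\delta_r=S_r$.

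To establish regularity, fix $a\in R$ and apply the $G$-$\delta$-perfect hypothesis to $R/aR$: by Lemmas~\ref{b4} and~\ref{b5} there is a flat $\delta$-cover of the form $f\colon R/L\to R/aR$ with $L\subseteq aR$ a two-sided ideal, $R/L$ cyclic and flat, and $aR/L\ll_\delta R/L$. Flatness of $R/L$ makes $L$ pure in $R$, so every $\ell\in L$ satisfies $\ell=\ell\ell'$ for some $\ell'\in L$. Combining the $\delta$-smallness $aR/L\subseteq\delta(R/L)$ with the identity $\delta_r=S_r$ via Lemma~\ref{lz}---applied with a suitable essential complement so as to peel off the projective semisimple direct-summand piece of $aR/L$---I expect to produce a central idempotent $e\in aR\cap S_r$ with $a\equiv ae\pmod L$. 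The purity of $L$ then lets me write $a(1-e)=a(1-e)\ell'$ for some $\ell'=ar\in L\subseteq aR$, and a clean-up using centrality of $e$ together with the fact that $eR$ is a finite product of fields yields an $x\in R$ with $a=a^2x$.

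The main obstacle is the final step of the third paragraph: translating the abstract $\delta$-cover data (the existence of the cyclic flat $R/L$ and the $\delta$-smallness of $aR/L$) into the concrete identity $a=a^2x$. One must simultaneously exploit (i) the purity of $L$, giving $\ell=\ell\ell'$ for each $\ell\in L$, (ii) Lemma~\ref{lz} applied with a carefully chosen complement, which extracts a projective semisimple direct summand of $R/L$ inside $aR/L$, and (iii) the identification $\delta_r=S_r$, which realizes that summand via central idempotents whose associated corner rings are division rings. Interlocking these three ingredients to produce a witness $x$ lying in $R$ itself (rather than in a quotient) is the delicate technical heart of the argument.
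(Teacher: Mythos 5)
Your reduction to $J=0$ and your auxiliary observations are correct ($R$ is reduced, idempotents are central, and $\delta_r=S_r$ follows from Theorem~\ref{t3} plus reducedness of $R/S_r$), but the proof stops exactly where the theorem lives. In the third paragraph you never derive $a=a^2x$: you only state that you ``expect'' to produce a central idempotent $e\in aR\cap S_r$ with $a\equiv ae \pmod L$, and you yourself label this the ``delicate technical heart.'' The mechanism you propose for producing $e$ does not work as described: Lemma~\ref{lz} needs a submodule $X$ with $X+\mathrm{Ker}(f)=R/L$ before it yields a projective semisimple summand inside the kernel, and there is no ``suitable essential complement'' on offer --- $\delta$-smallness of $aR/L$ in the flat (generally non-projective) module $R/L$ does not make $aR/L$, or any piece of it containing $\bar a$, a semisimple direct summand. (A smaller slip: for the right module $R/L$ to be flat the purity identity is $\ell=\ell'\ell$ with $\ell'\in L$, i.e.\ the multiplier acts on the left, as in \cite[Theorem 4.23]{Lam}; your $\ell=\ell\ell'$ is on the wrong side, and the side matters in the computation you would need.)

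The paper closes this gap by a different mechanism, which you should compare with your plan. Since $R$ is right duo, the ideal $I\subseteq xR$ furnished by Lemma~\ref{b4} is two-sided, so $R/I$ is again a right $G$-$\delta$-perfect ring and Theorem~\ref{t3} applies to it: $\delta(R/I)/Soc(R/I)$ is nil. As $\bar x=x+I\in \mathrm{Ker}(f)\subseteq\delta(R/I)$, some power $\bar x^{\,n}$ lies in $Soc(R/I)$; the descending chain $\bar x^{\,k}R\supseteq\bar x^{\,k+1}R\supseteq\cdots$ inside this Artinian semisimple module stabilizes, giving $u:=x^k-x^{k+1}r\in I$. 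Flatness of $R/I$ gives $u=au$ with $a\in I\subseteq xR$, hence $u=a^{k+1}u$, and right duo gives $a^{k+1}\in (xR)^{k+1}\subseteq x^{k+1}R$, so $x^k\in x^{k+1}R$. Thus $R$ is strongly $\pi$-regular, and \cite[Theorem 3]{Azum} together with semiprimeness ($J=0$, so $(x^{k-1}-x^kr)^2=0$ forces $x^{k-1}=x^kr$) descends step by step to $x=x^2r$. Note that the paper reaches strong regularity through strong $\pi$-regularity rather than through regularity plus the duo condition, and it never needs $R$ reduced or $\delta_r=S_r$; your extra structure, while true, does not substitute for the nilpotency-modulo-socle and chain-stabilization argument that actually produces the witness.
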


\begin{proof} By Proposition~\ref{ph}(2), we can assume that $J=0$
without loss of generality. Let $x$ be a nonzero element of $R$.
By Lemma~\ref{b4}, $R/xR$ has a flat $\delta$-cover of the form
$f:R/I\rightarrow R/xR$, where $I\subseteq xR$ and $Ker(f)=xR/I$.
Hence, $xR/I\subseteq \delta(R/I)$. $R/I$ being right
$G$-$\delta$-perfect implies that $\delta(R/I)/Soc(R/I)$ is nil so
that there exists a positive integer n such that
$\overline{x}^n=x^n+I\in Soc(R/I)$. Since $\overline{x}^nR$ is
semisimple and finitely generated, it is an Artinian $R$-module.
It follows that there exists a positive integer $k\geq n$ such
that $\overline{x}^kR=\overline{x}^{k+1}R=\ldots$. Then there
exists $r\in R$ such that $x^k-x^{k+1}r\in I$. Since $R/I$ is
flat, it follows from \cite[Theorem 4.23]{Lam} that there is an
element $a\in I$ such that $x^k-x^{k+1}r=a(x^k-x^{k+1}r)$ and
hence $x^k-x^{k+1}r=a^{k+1}(x^k-x^{k+1}r)$. Also, we have that
$I^{k+1}\subseteq (xR)^{k+1}\subseteq x^{k+1}R$ because $R$ is
right duo. Then $a^{k+1}\in x^{k+1}R$ which means that $x^k\in
x^{k+1}R$. So, $R$ is strongly $\pi$-regular. Then, by
\cite[Theorem 3]{Azum}, we may assume that $x^k=x^{k+1}r$ and
$xr=rx$ for some $r\in R$. It follows that $(x^{k-1}-x^kr)^2=0$.
But since $J=0$, $R$ is semiprime and so $x^{k-1}-x^kr=0$. If we
continue this process, then we get that $x=x^2r$. Thus, $R$ is
strongly regular.
\end{proof}

\cite[Example 4.3]{zhou} shows that Theorem~\ref{td} need not be true if
$R$ is not right duo.

\vspace{0.3in}

We obtain some conditions under which a right $G$-$\delta$-perfect ring
is $\delta$-semiregular by Theorems~\ref{r} and ~\ref{td}.
\begin{cor} Assume that $R$ is a right duo ring or a ring
such that every cyclic flat right $R$-module is projective.
If $R$ is right $G$-$\delta$-perfect, then $R$ is $\delta$-semiregular.
\end{cor}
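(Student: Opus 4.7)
The plan is to invoke the characterization from \cite[Theorem 3.5]{zhou} quoted in the preceding paragraph: $R$ is $\delta$-semiregular if and only if $R/\delta_r$ is regular and idempotents lift modulo $\delta_r$. The lifting half is free under either hypothesis, because Theorem~\ref{t3} asserts that idempotents lift modulo $\delta_r$ in any right $G$-$\delta$-perfect ring. Thus the whole task reduces to showing that $R/\delta_r$ is regular in each of the two alternative cases.

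In the first alternative, where every cyclic flat right $R$-module is projective, Theorem~\ref{r} applies verbatim and gives that $R/\delta_r$ is regular. In the right duo alternative, Theorem~\ref{td} supplies that $R/J$ is strongly regular, hence in particular regular. Because $\delta_r$ is the intersection of the essential maximal right ideals while $J(R)$ is the intersection of all maximal right ideals, we have $J(R) \subseteq \delta_r$, so $R/\delta_r$ is a further quotient of $R/J$ by the two-sided ideal $\delta_r/J$. The von Neumann relation $a = axa$ is preserved under passage to a quotient by a two-sided ideal, so regularity of $R/J$ transfers to regularity of $R/\delta_r$.

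Combining the two conditions, \cite[Theorem 3.5]{zhou} now delivers $\delta$-semiregularity in each case. There is no substantial obstacle here: the corollary is essentially a bookkeeping assembly of Theorems~\ref{t3}, \ref{r}, and \ref{td}, with the only mild subtlety being the direction of the containment $J(R) \subseteq \delta_r$ needed to cross from regularity modulo $J$ to regularity modulo $\delta_r$ in the duo case.
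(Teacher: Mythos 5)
Your proposal is correct and follows exactly the route the paper intends (the corollary is stated without proof, as an assembly of Theorems~\ref{t3}, \ref{r}, and \ref{td} together with Zhou's characterization of $\delta$-semiregularity). The one detail the paper leaves implicit --- passing from regularity of $R/J$ to regularity of $R/\delta_r$ in the duo case via $J\subseteq\delta_r$ and the fact that von Neumann regularity survives quotients --- you supply correctly.
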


Recall that a ring $R$ is said to be {\em right max} if every
nonzero right $R$-module has a maximal submodule. Due to Hamsher
\cite{Ham}, if $R$ is commutative, then $R$ is right max if and
only if $R/J(R)$ is regular and $J(R)$ is right $T$-nilpotent. By Theorems~\ref{t3} and ~\ref{td} we
have the following corollaries  as generalizations of
\cite[Corollaries 2.9 and 2.10]{AAES}.

\begin{cor}If $R$ is a commutative $G$-$\delta$-perfect ring, then
$R$ is a max ring. In particular, every prime ideal of $R$ is
maximal.
\end{cor}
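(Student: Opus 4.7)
The plan is to apply Hamsher's criterion cited just above: for a commutative ring, being max is equivalent to $R/J(R)$ being regular and $J(R)$ being $T$-nilpotent. Regularity of $R/J$ falls out of Theorem~\ref{td}, since commutativity makes $R$ right duo, and that theorem then gives that $R/J$ is strongly regular, hence regular. So the real work lies in establishing the $T$-nilpotency of $J(R)$.

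For this, I would start from Theorem~\ref{t3}, which tells us that $J(R/S_r) = \delta_r/S_r$ is right $T$-nilpotent. Given any sequence $a_1, a_2, \ldots$ in $J(R)$, the inclusion $J(R) \subseteq \delta_r$ places the images of the $a_i$ in $\delta_r/S_r$, so some finite product $a_n \cdots a_1$ already lies in $S_r$. The bridge from there to $T$-nilpotency of $J(R)$ itself is the elementary identity $S_r \cdot J(R) = 0$: indeed $S_r = \soc(R_R)$ is semisimple as a right $R$-module, and $J(R)$ annihilates every semisimple module. Using commutativity to rearrange the product,
\[
a_{n+1} a_n \cdots a_1 = (a_n \cdots a_1)\, a_{n+1} \in S_r \cdot J(R) = 0,
\]
so $J(R)$ is $T$-nilpotent, and Hamsher's criterion now delivers that $R$ is a max ring.

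For the ``in particular'' clause, I would observe that $R/J$ being regular is zero-dimensional, while $J$, being $T$-nilpotent, is nil and therefore contained in every prime ideal of $R$. Hence primes of $R$ correspond bijectively to primes of $R/J$, and since every prime of the zero-dimensional ring $R/J$ is maximal, so is every prime of $R$. The one genuine obstacle in the argument is the passage from $T$-nilpotency modulo $S_r$ to $T$-nilpotency of $J(R)$ itself, and the annihilator identity $S_r J(R) = 0$ combined with commutativity resolves this in a single line.
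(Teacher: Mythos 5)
Your proof is correct and follows exactly the route the paper intends: Hamsher's criterion, with regularity of $R/J$ from Theorem~\ref{td} (commutative implies duo) and $T$-nilpotency of $J(R)$ deduced from Theorem~\ref{t3}. Your bridge from $T$-nilpotency of $J(R/S_r)$ to that of $J(R)$ via $S_r\cdot J(R)=0$ is precisely the observation the paper itself records as ``easy to observe'' just after Theorem~\ref{tf}, so nothing is missing.
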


\begin{cor} Let $R$ be a commutative $G$-$\delta$-perfect ring.
Then a module $M$ is Noetherian if and only if $M$ is Artinian.
\end{cor}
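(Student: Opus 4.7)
The plan is to combine the previous corollary with Hamsher's theorem: since $R$ is commutative and $G$-$\delta$-perfect, $R$ is a max ring with every prime ideal maximal, $J:=J(R)$ is $T$-nilpotent, and $R/J$ is von Neumann regular. Each direction will then be settled by a short argument making use of these three structural facts.

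For Noetherian $\Rightarrow$ Artinian, I would write $M=Rm_1+\cdots+Rm_k$ and observe that each cyclic submodule $Rm_i\cong R/\mathrm{Ann}(m_i)$ is Noetherian as an $R$-module, which forces $R/\mathrm{Ann}(m_i)$ to be Noetherian as a ring. Since every prime of $R/\mathrm{Ann}(m_i)$ is maximal (because every prime of $R$ is), this ring is Noetherian of Krull dimension zero, hence Artinian by Akizuki's theorem. Thus each $Rm_i$ is an Artinian $R$-module and $M$, being a finite sum of them, is Artinian.

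For Artinian $\Rightarrow$ Noetherian, I would look at the descending chain $M\supseteq MJ\supseteq MJ^2\supseteq\cdots$, which by DCC stabilizes at some $N=MJ^n$ satisfying $NJ=N$. Bass's characterization of $T$-nilpotency (if $J$ is $T$-nilpotent then $KJ\neq K$ for every nonzero module $K$) forces $N=0$, so $MJ^n=0$. The successive factors $MJ^i/MJ^{i+1}$ of this finite filtration are Artinian modules over the commutative regular ring $R/J$; the key sub-claim is that any such factor has finite length. To see this, for any element $a$ in such a factor, $Ra\cong R/\mathrm{Ann}(a)$ is Artinian, and since quotients of commutative regular rings remain regular (immediate from $r=r^2s$), $R/\mathrm{Ann}(a)$ is a commutative Artinian regular ring, hence a finite product of fields. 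Thus $Ra$ is semisimple, and a sum of semisimple submodules is semisimple. Assembling the finitely many finite-length factors then shows $M$ itself has finite length and is therefore Noetherian.

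The only delicate step is the sub-claim that Artinian modules over a commutative von Neumann regular ring are semisimple; this reduces to the standard identification of commutative Artinian regular rings with finite products of fields, so I do not expect a genuine obstacle.
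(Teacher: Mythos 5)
Your proof is correct and follows the route the paper intends: it derives the statement from exactly the structural facts the paper points to, namely the preceding corollary ($R$ is a max ring in which every prime ideal is maximal, so by Hamsher's theorem $J(R)$ is $T$-nilpotent and $R/J(R)$ is regular). The paper itself omits the details, deferring to the analogous argument in \cite{AAES}, and your two directions --- Akizuki's theorem applied to the zero-dimensional Noetherian rings $R/\mathrm{Ann}(m_i)$ for Noetherian $\Rightarrow$ Artinian, and the finite $J$-adic filtration together with the semisimplicity of Artinian modules over a commutative regular ring for the converse --- supply those details correctly.
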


We obtain the following result by a proof similar to that of \cite[Theorem 3.3]{AAES}. We give the proof for
completeness' sake.

\begin{thm}\label{tf} Let $R$ be a ring. If $R/\delta_r$ is regular and $J(R/S_r)$ is right $T$-nilpotent, then
every module of the form $F/K$, where $F$ is a free module and $K$ is a countably generated submodule
of $F$, has a flat $\delta$-cover.
\end{thm}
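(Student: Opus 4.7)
The plan is to mirror the proof of \cite[Theorem 3.3]{AAES}, replacing $J(R)$ by $\delta_r$ at every step and invoking Theorem~\ref{t1} in place of the Bass-type fact used there.

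First I would reduce to the case in which $F$ itself is countably generated. Since $K$ is countably generated it lies inside a countably generated free direct summand $F_0$ of $F$; writing $F = F_0 \oplus F_1$ gives $F/K \cong F_0/K \oplus F_1$, and since $F_1$ is flat (the identity is a flat $\delta$-cover of $F_1$) while flat $\delta$-covers combine under finite direct sums by Lemma~\ref{b1}, it suffices to treat $F_0/K$.

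Now assume $F$ is countably generated free and enumerate $K = \sum_{n\ge 1}x_nR$, with partial sums $K_n = \sum_{i\le n}x_iR$. I would construct a submodule $L \subseteq K$ that is pure in $F$ and such that $K/L$ is projective semisimple. If achieved, purity of $L$ makes $F/L$ flat by \cite[Corollary 4.86(1)]{Lam}, while $K/L$ projective semisimple is nonsingular semisimple, hence $\delta$-small in $F/L$ (every nonsingular semisimple submodule is $\delta$-small, as recorded in Section~1). The canonical epimorphism $F/L \to F/K$ would therefore be the sought flat $\delta$-cover. To build $L$, I would use that $R/\delta_r$ is regular: in the free $R/\delta_r$-module $F/F\delta_r$ every finitely generated submodule is a direct summand, so at each finite stage $n$ I can split $(K_n + F\delta_r)/F\delta_r$ off as a projective $R/\delta_r$-direct summand and, lifting back to $F$, peel off a projective semisimple contribution to the complement of $L$ while absorbing the rest of $x_nR$ into $L$. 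The countable induction is then stitched together by taking a colimit, and Theorem~\ref{t1}, which says $\delta(F)\ll_\delta F$ for a countably generated projective $F$, is what controls the infinite tail so that $L$ remains pure and $K/L$ remains projective semisimple in the limit.

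The main obstacle is precisely this simultaneous control in the infinite induction: $L$ must remain pure in $F$ (so that $F/L$ is flat), while the quotient $K/L$ must remain projective semisimple (to guarantee $\delta$-smallness). The two requirements pull against each other, and success depends on choosing the finite-stage splittings carefully enough that both purity and projective semisimplicity survive the colimit. Regularity of $R/\delta_r$ is what makes the finite-stage splittings possible; $T$-nilpotency of $J(R/S_r)$, repackaged as Theorem~\ref{t1}, is what makes the colimit behave.
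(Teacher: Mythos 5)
Your overall plan---reduce to $F$ countably generated, then find $L\subseteq K$ pure in $F$ such that the canonical map $F/L\to F/K$ is a flat $\delta$-cover---is a legitimate reformulation of the theorem (the proof of Lemma~\ref{b4} shows every flat $\delta$-cover of $F/K$ can be put in this form, and purity of $L$ is exactly flatness of $F/L$). But there is a genuine gap at precisely the point you flag as ``the main obstacle'': the construction of $L$ is never carried out, and worse, the intermediate statement you aim for is false. You require $K/L$ to be \emph{projective semisimple}, which is strictly stronger than $K/L\ll_{\delta}F/L$ and is not achievable in general. Take $R=\mathbb{Z}/4\mathbb{Z}$, $F=R$, $K=2R$. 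Here $\delta_r=S_r=2R$, so $R/\delta_r\cong\mathbb{Z}/2\mathbb{Z}$ is regular and $J(R/S_r)=0$ is right $T$-nilpotent; and $F/K$ does have a flat (even projective) $\delta$-cover, namely $R\to R/2R$, whose kernel $2R=J(R)$ is small, hence $\delta$-small. Yet the only submodule of $K$ that is pure in $F$ is $L=0$ (since $R/2R\cong\mathbb{Z}/2\mathbb{Z}$ is not flat over $\mathbb{Z}/4\mathbb{Z}$), and $K=2R$ is simple but not projective. The $\delta$-smallness here comes from $K\subseteq\delta(F)$, not from projective semisimplicity, and your sketch reproduces exactly this failure: ``absorbing the rest of $x_nR$ into $L$'' forces $L=2R$, which is not pure. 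So the two requirements do not merely ``pull against each other''; they are jointly unsatisfiable.

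The paper's proof sidesteps this tension by not working inside $F$ at all. It writes $F/K=\underset{\rightarrow}\lim\, F/K_n$ with $K_n=\sum_{i\le n}x_iR$; each $F/K_n$ is a direct sum of a finitely presented and a free module by \cite[Theorem 4.26(c)]{Lam}, hence has a projective $\delta$-cover $\phi_n:P_n\to F/K_n$ by \cite[Theorem 3.6]{zhou}---this is where the regularity of $R/\delta_r$ (i.e.\ $\delta$-semiregularity) is actually used, rather than to split finitely generated submodules of $F/F\delta_r$. Lifting the transition maps through the projectives $P_n$ yields a directed system of these covers; the limit $P=\underset{\rightarrow}\lim\, P_n$ is flat, and the kernel of $P\to F/K$ is an increasing union of images of the $\delta(P_n)$, each $\delta$-small in $P$ because $\delta(P_n)\ll_{\delta}P_n$ by Theorem~\ref{t1}. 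So you identified correctly where the $T$-nilpotency enters, but the flat cover is a direct limit of projectives, not a quotient $F/L$ with $K/L$ projective semisimple. If you weaken your target to ``$K/L\ll_{\delta}F/L$'' you are back to needing such a limit argument anyway, since Theorem~\ref{t1} controls $\delta(M)$ only for projective $M$, and $F/L$ is merely flat.
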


\begin{proof} Let $K=\sum_{i=1}^{\infty} x_iR$ and $K_n=\sum_{i=1}^n x_iR$ for each $n\geq 1$.
Then $K=\underset{\rightarrow} \lim K_n$ and $F/K=\underset{\rightarrow} \lim F/K_n$. By \cite[Theorem 4.26(c)]{Lam},
$F/K_n$ is the direct sum of a finitely presented and a free module. It follows from \cite[Theorem 3.6]{zhou} that
$F/K_n$ has a projective $\delta$-cover. Let $\phi_n: P_n\rightarrow F/K_n$ be the epimorphism with $P_n$ projective
and $L_n=Ker(\phi_n)\ll_{\delta} P_n$. So $L_n\subseteq P_n\delta_r$. Let $\pi_n: F/K_n\rightarrow F/K_{n+1}$ be the
natural epimorphism. As $P_n$ is projective there is a homomorphism $\alpha_n: P_n\rightarrow P_{n+1}$ such that
$\phi_{n+1}\alpha_n=\pi_n\phi_n$. We also have that $\alpha_n(L_n)\subseteq L_{n+1}$. Hence, we obtain that
$0\rightarrow L_n\rightarrow P_n\rightarrow F/K_n\rightarrow 0$ is a directed system of exact sequences. Let
$L=\underset{\rightarrow} \lim L_n$, $P=\underset{\rightarrow} \lim P_n$, $\beta_n:L_n\rightarrow L$
and $\gamma_n:P_n\rightarrow P$. So we obtain the exact sequence
 $0\rightarrow L\stackrel{i} \rightarrow P\stackrel{\phi} \rightarrow F/K\rightarrow 0$.
For any $x\in L$ there is an integer $n\geq 1$ such that
$x=\beta_n(y)$ for some $y\in L_n\subseteq P_n\delta_r$. Thus,
$i(x)=i(\beta_n(y))=\gamma_n(y)\in
\gamma_n(P_n\delta_r)=\gamma_n(\delta(P_n))$.
 By Theorem~\ref{t1}, $\delta(P_n)\ll_{\delta} P_n$.
So it follows from \cite[Lemma 1.3(2)]{zhou} that $\gamma_n(\delta(P_n))\ll_{\delta} P$.
Hence, $P=\underset{\rightarrow} \lim P_n$
is a flat module and $Ker(\phi)=i(L)\ll_{\delta} P$. Thus, $\phi:P\rightarrow F/K$ is a flat $\delta$-cover of $F/K$.
\end{proof}

\begin{cor} Let $R$ be a right max ring with $R/\delta_r$ regular.
Let $F$ be a free module and $K\subseteq F$. Suppose that
$\Omega=\{ T\subseteq F|$ $T$ is an essential maximal submodule of
$F$ not containing $K\}$ is countable. Then $F/K$ has a flat
$\delta$-cover.
\end{cor}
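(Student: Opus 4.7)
The plan is to reduce to Theorem~\ref{tf} by replacing $K$ with a carefully chosen countably generated submodule $K' \subseteq K$ for which $K/K' \ll_\delta F/K'$. To build $K'$, I would enumerate $\Omega = \{T_1, T_2, \ldots\}$ and, for each index $i$, pick some $k_i \in K \setminus T_i$; set $K' = \sum_i k_i R \subseteq K$. This $K'$ is countably generated by design, and it has the key feature that any essential maximal submodule of $F$ containing $K'$ automatically contains $K$: otherwise such a submodule would lie in $\Omega$, so would equal some $T_i$, contradicting the fact that it contains $k_i$ while $k_i \notin T_i$.

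Next, I would verify $K/K' \ll_\delta F/K'$ directly from the definition. Given $X$ with $K' \subseteq X \subsetneq F$, $F/X$ singular, and $X + K = F$, the right max hypothesis supplies a maximal submodule $M \supseteq X$ with $M \neq F$. Since $F/M$ is a simple singular quotient of $F/X$ and $F$ is projective, $M$ must be essential in $F$ (any complement of $M$ in $F$ would be a direct summand of a projective module, making the simple module $F/M$ projective and hence non-singular, a contradiction). Then $M \supseteq K'$ forces $M \supseteq K$ by the previous step, yielding $M \supseteq X + K = F$, a contradiction. Hence $K/K' \ll_\delta F/K'$, and the natural projection $\pi \colon F/K' \to F/K$ is a $\delta$-cover.

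To close, I would observe that right maxness passes to the quotient $R/S_r$, and then Bass's characterization (every nonzero module $M$ over $R/S_r$ has a maximal submodule, so $M \cdot J(R/S_r) \subsetneq M$) yields right $T$-nilpotency of $J(R/S_r) = \delta_r/S_r$. Combined with the given regularity of $R/\delta_r$, this activates Theorem~\ref{tf}, producing a flat $\delta$-cover $\phi \colon P \to F/K'$. The composition $\pi\phi \colon P \to F/K$ is then a $\delta$-cover by Lemma~\ref{l1} with $P$ flat, completing the construction.

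The main obstacle is the verification that $K/K' \ll_\delta F/K'$: it requires lining up three ingredients carefully---the countable choice of the $k_i$, the right max axiom (to produce a maximal submodule above $X$), and the projectivity of $F$ (to promote that maximal submodule to an essential one). Everything else is a routine assembly of Lemma~\ref{l1} and Theorem~\ref{tf}.
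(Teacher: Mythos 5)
Your proof is correct and follows essentially the same route as the paper's: the same countably generated submodule $L=\sum_{T\in\Omega}x_TR\subseteq K$, the same appeal to Theorem~\ref{tf} (after observing that right maxness passes to $R/S_r$ and forces $J(R/S_r)$ to be right $T$-nilpotent), and the same use of the right max hypothesis to produce an essential maximal submodule witnessing $K/L\ll_{\delta} F/L$, followed by composing with the flat $\delta$-cover of $F/L$ via Lemma~\ref{l1}. The only cosmetic differences are that you absorb the $\Omega=\emptyset$ case into the general construction and justify essentiality of the maximal submodule by noting a singular simple module cannot be projective, rather than by the paper's direct claim that $F/X$ singular makes $X$ essential in the projective module $F$.
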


\begin{proof} If $\Omega=\emptyset$, then $K\subseteq T$  for every essential maximal submodule $T$ of $F$.
Then $K\subseteq \delta(F)$ and so $K\ll_{\delta} F$ because
$\delta(F)\ll_{\delta} F$ by Theorem~\ref{t1}. Hence, the natural
epimorphism $F\rightarrow F/K$ is a flat $\delta$-cover of $F/K$.

Suppose that $\Omega\neq\emptyset$. For each $T\in \Omega$, let
$x_T\in K\setminus T$. Consider $L=\underset{T\in \Omega} \sum
x_TR\subseteq K$. So by Theorem~\ref{tf}, $F/L$ has a flat
$\delta$-cover. Now we will show that the natural epimorphism
$F/L\rightarrow F/K$ has a $\delta$-small kernel. Suppose that
$K/L+U/L=F/L$ and $F/U$ is singular, where $L\subseteq U\subseteq
F$. Since $F$ is projective, $U$ is an essential submodule of $F$.
If $F/U\neq 0$, then it has a maximal submodule $H/U$. Hence, $H$
is an essential maximal submodule of $F$ not containing $K$. But
then $x_H\in L\subseteq U\subseteq H$, a contradiction. Hence,
$F/U=0$ and so $K/L\ll_{\delta} F/L$.
\end{proof}

It is easy to observe that if $J(R/S_r)$ is right $T$-nilpotent, then $J(R)$ is right $T$-nilpotent, too.
Therefore, it follows from Theorem~\ref{t3} that if $R$ is a semilocal ring, then $R$ is right
$G$-$\delta$-perfect if and only if $R$ is right $G$-perfect. But we do not know an example of a
$G$-$\delta$-perfect ring that is not $G$-perfect.

\section{Some characterizations of $\delta$-semiperfect and $\delta$-perfect rings}

We start this section with some characterizations of
$\delta$-semiperfect rings. Firstly, we consider generalized
(locally) projective $\delta$-covers.
\begin{thm}\label{s1} Let $R$ be a ring. Suppose that idempotents lift modulo $\delta_r$.
Then the following statements are equivalent:

1) $R$ is $\delta$-semiperfect.

2) Every simple right $R$-module has a generalized locally projective $\delta$-cover.

3) Every simple right $R$-module has a generalized projective $\delta$-cover.
\end{thm}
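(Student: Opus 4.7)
The plan is to dispatch $(1)\Rightarrow(3)$ and $(3)\Rightarrow(2)$ as trivial and concentrate on $(2)\Rightarrow(1)$. For $(1)\Rightarrow(3)$, $\mathrm{Ker}(f)\ll_{\delta}P$ forces $\mathrm{Ker}(f)\subseteq\delta(P)$; for $(3)\Rightarrow(2)$, every projective module is locally projective.

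For $(2)\Rightarrow(1)$ I will take a simple right $R$-module $M$ and build a projective $\delta$-cover of it. If $M$ is nonsingular then $M\cong R/I$ for a non-essential maximal right ideal $I$, which is a direct summand of $R$, making $M$ projective and the identity its projective $\delta$-cover. Assume instead that $M$ is singular. Since $\delta_r$ is a two-sided ideal contained in every essential maximal right ideal, $M\delta_r=0$. Let $f:P\to M$ be the generalized locally projective $\delta$-cover supplied by $(2)$. Proposition~\ref{p1} gives $\delta(P)=P\delta_r$, so $\mathrm{Ker}(f)\subseteq P\delta_r$; combining this with $f(P\delta_r)=M\delta_r=0$, which forces $P\delta_r\subseteq\mathrm{Ker}(f)$, I conclude $\mathrm{Ker}(f)=P\delta_r$, so that $f$ induces an isomorphism $P/P\delta_r\cong M$. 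By Remark~\ref{r1}(2), $P/P\delta_r$ is a locally projective $R/\delta_r$-module; being cyclic (hence countably generated), it is projective by Remark~\ref{r1}(4). Thus $M$ is a simple projective $R/\delta_r$-module, so $M\cong\bar e\,(R/\delta_r)$ for some idempotent $\bar e\in R/\delta_r$. The hypothesis lifts $\bar e$ to an idempotent $e\in R$; then $eR$ is projective, and the natural map $\phi:eR\to\bar e\,(R/\delta_r)\cong M$, $er\mapsto er+\delta_r$, is an epimorphism with kernel $eR\delta_r$.

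What remains, and what I expect to be the main obstacle, is to show $eR\delta_r\ll_{\delta}eR$. My argument will be by contradiction: assume $X+eR\delta_r=eR$ with $eR/X$ singular but $X\subsetneq eR$. The nonzero cyclic module $eR/X$ has a maximal submodule $L/X$, giving a maximal $L\leq eR$ with $eR/L$ simple and singular, so $L$ is an essential maximal submodule of $eR$. Because $eR$ is projective, \cite[Lemma~1.9]{zhou} combined with Proposition~\ref{p1} identifies the intersection of all essential maximal submodules of $eR$ with $\delta(eR)=eR\delta_r$, so $L\supseteq eR\delta_r$. But $eR\delta_r$ is itself maximal in $eR$ (as $eR/eR\delta_r\cong M$ is simple), forcing $L=eR\delta_r$. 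This yields $X+eR\delta_r\subseteq L+eR\delta_r=eR\delta_r\neq eR$, contradicting $X+eR\delta_r=eR$. Therefore $X=eR$, giving $eR\delta_r\ll_{\delta}eR$, and $\phi:eR\to M$ is the sought projective $\delta$-cover.
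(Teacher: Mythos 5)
Your proposal is correct, but the hard direction $(2)\Rightarrow(1)$ ends up genuinely different from the paper's. The first half coincides: both arguments use Proposition~\ref{p1} to place $\mathrm{Ker}(f)$ inside $P\delta_r$, the computation $f(P\delta_r)=M\delta_r$ to pin down $\mathrm{Ker}(f)=P\delta_r$, and Remark~\ref{r1}(2),(4) to conclude that the simple module is projective over $R/\delta_r$. At that point the paper stops, having shown $R/\delta_r$ is semisimple, and lets the idempotent-lifting hypothesis do its work invisibly through the cited characterization of $\delta$-semiperfect rings (\cite[Theorem 3.6]{zhou}, via the equivalence $(1)\Leftrightarrow(3)$, which the paper also outsources to \cite[Lemma 4.3]{ANO}). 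You instead push through by hand: you lift $\bar e$ to an idempotent $e$, and verify directly that $eR\delta_r\ll_{\delta}eR$ by identifying $\delta(eR)=eR\delta_r$ with the intersection of the essential maximal submodules of $eR$ and showing any offending $X$ would sit inside an essential maximal submodule equal to $eR\delta_r$. This makes your proof self-contained where the paper's leans on external citations, at the cost of essentially reproving a piece of Zhou's Theorem~3.6; your cyclic scheme $(1)\Rightarrow(3)\Rightarrow(2)\Rightarrow(1)$ is also logically complete and avoids \cite{ANO} altogether. Two small points worth a sentence in a final write-up: justify that a maximal submodule $L$ of the projective module $eR$ with $eR/L$ singular must be essential (otherwise $L$ splits off and $eR/L$ would be a projective, hence nonsingular, simple module), and note that $eR\cap\delta_r=e\delta_r=eR\delta_r$ so that the kernel of $\phi$ is what you claim.
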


\begin{proof} $(1)\Leftrightarrow (3)$ It follows from \cite[Lemma 4.3]{ANO} and \cite[Theorem 3.6]{zhou}.

$(1)\Rightarrow (2)$ It is obvious.

$(2)\Rightarrow (1)$ To show that $\overline{R}=R/\delta_r$ is semisimple we need to prove each simple
right $\overline{R}$-module $S$ is projective. If we regard $S$ as a simple $R$-module, then $S$ has a
generalized locally projective $\delta$-cover $f:P\rightarrow S$. Since $P$ is locally projective,
$Ker(f)\subseteq P\delta_r$ by Proposition~\ref{p1}.

If $P\delta_r=P$, then
$S=f(P)=f(P\delta_r)=f(P)\delta_r=S\delta_r=0$, which is
impossible. Then $P\delta_r\neq P$. Since $Ker(f)$ is maximal in
$P$, we have that $Ker(f)=P\delta_r$ and so $P/P\delta_r\cong S$.
Since $P$ is a locally projective $R$-module, $P/P\delta_r$ is a
locally projective $\overline{R}$-module and so $S$ is a locally
projective $\overline{R}$-module. But $S$ is simple so that it is
projective. Thus, $\overline{R}$ is semisimple.
\end{proof}

\begin{cor} Let $R$ be a ring. Suppose that idempotents lift modulo $\delta_r$.
Then the following statements are equivalent:

1) $R$ is $\delta$-semiperfect.

2) Every finitely generated (cyclic) right $R$-module has a generalized locally projective $\delta$-cover.

3) Every finitely generated (cyclic) right $R$-module has a generalized projective $\delta$-cover.
\end{cor}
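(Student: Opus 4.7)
The plan is to reduce the corollary to Theorem~\ref{s1} together with the standard characterization of $\delta$-semiperfect rings by projective $\delta$-covers of finitely generated modules.

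First, for $(3) \Rightarrow (2)$ I would simply note that every projective module is locally projective, so any generalized projective $\delta$-cover is automatically a generalized locally projective $\delta$-cover. No work is required here.

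For $(2) \Rightarrow (1)$ the idea is to restrict hypothesis $(2)$ to simple right $R$-modules. Since every simple module is cyclic (hence finitely generated), $(2)$ implies in particular that every simple right $R$-module has a generalized locally projective $\delta$-cover. Combining this with the standing assumption that idempotents lift modulo $\delta_r$, the implication $(2) \Rightarrow (1)$ of Theorem~\ref{s1} yields that $R$ is $\delta$-semiperfect.

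For $(1) \Rightarrow (3)$ I would invoke \cite[Theorem 3.6]{zhou}, which states that over a $\delta$-semiperfect ring every finitely generated right $R$-module has an actual projective $\delta$-cover. If $f:P \to M$ is such a $\delta$-cover, then $\operatorname{Ker}(f) \ll_\delta P$ gives $\operatorname{Ker}(f) \subseteq \delta(P)$ by the very definition of $\delta(P)$, so $f$ is in particular a generalized projective $\delta$-cover. The cyclic version follows at once since cyclic modules are finitely generated.

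The main obstacle is essentially cosmetic: this corollary is just a packaging of Theorem~\ref{s1} together with Zhou's characterization, and the only thing to check is that the idempotent-lifting hypothesis is inherited in each direction, which is immediate since it is a standing assumption of the corollary itself.
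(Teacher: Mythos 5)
Your proof is correct and matches the paper's (implicit) argument: the corollary is stated without proof precisely because it reduces to Theorem~\ref{s1} by observing that simple modules are cyclic, combined with Zhou's characterization that over a $\delta$-semiperfect ring every finitely generated (cyclic) module has a projective $\delta$-cover, which is automatically a generalized projective $\delta$-cover since $\operatorname{Ker}(f)\ll_{\delta}P$ forces $\operatorname{Ker}(f)\subseteq\delta(P)$. Nothing to add.
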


Recall from \cite{CD} that an $R$-module $M$ is called {\em
finitely projective} if, for any finitely generated submodule
$M_0$ of $M$, there exist a finitely generated free module $F$ and
homomorphisms $f:M_0\rightarrow F$ and $g:F\rightarrow M$ such
that $g(f(x))=x$ for all $x\in M_0$. Note that a finitely
generated finitely projective module is projective. Also, it is
well-known that the following implications hold for a module:
\begin{center}
locally projective $\Rightarrow$ finitely projective $\Rightarrow$
flat.
\end{center}
Note that we will call a $\delta$- cover $f:P\rightarrow M$ of a
module $M$ a {\em locally (finitely) projective $\delta$-cover} in
case $P$ is a locally (finitely) projective module.

\begin{thm}\label{ts} The following statements are equivalent for a ring $R$:

1) $R$ is $\delta$-semiperfect.

2) Every simple right $R$-module has a locally projective $\delta$-cover.

3) Every simple right $R$-module has a finitely projective
$\delta$-cover.

4) $R/\delta_r$ is semisimple and every simple right $R$-module
has a flat $\delta$-cover.
\end{thm}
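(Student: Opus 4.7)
I will prove $(1)\Rightarrow(2)\Rightarrow(3)\Rightarrow(1)$ together with $(1)\Leftrightarrow(4)$. The implications $(1)\Rightarrow(2)\Rightarrow(3)$ come for free from the displayed chain \emph{projective} $\Rightarrow$ \emph{locally projective} $\Rightarrow$ \emph{finitely projective} noted just before the theorem; likewise $(1)\Rightarrow(4)$ follows because a projective $\delta$-cover is automatically a flat $\delta$-cover, while $R$ being $\delta$-semiperfect forces $R/\delta_r$ to be semisimple by \cite[Theorem 3.6]{zhou}.

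For $(3)\Rightarrow(1)$: given a simple $R$-module $S$ and a finitely projective $\delta$-cover $f\colon P\to S$, Lemma~\ref{b5} supplies a cyclic direct summand $P'$ of $P$ for which $f|_{P'}\colon P'\to S$ is still a $\delta$-cover. The class of finitely projective modules is closed under direct summands, and a finitely generated finitely projective module is projective (recorded just before the theorem), so $P'$ is projective and $f|_{P'}$ is a projective $\delta$-cover of $S$. Hence $R$ is $\delta$-semiperfect.

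The substantive direction is $(4)\Rightarrow(1)$. Decompose the semisimple Artinian quotient as $R/\delta_r=S_1\oplus\cdots\oplus S_n$; by hypothesis each $S_i$ has a flat $\delta$-cover, and Lemma~\ref{b1} assembles these into a flat $\delta$-cover of $R/\delta_r$. The natural surjection $\pi\colon R\to R/\delta_r$ has kernel $\delta_r=\delta(R)$, so it is a generalized projective $\delta$-cover, and Theorem~\ref{t2} promotes it to a projective $\delta$-cover, yielding $\delta_r\ll_\delta R$. Running the proof of Lemma~\ref{b4} on this flat $\delta$-cover produces a flat $\delta$-cover of the form $R/L\to R/\delta_r$ with $L\subseteq\delta_r$ and $R/L$ flat, whence Proposition~\ref{t4} makes $L$ projective semisimple.

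Passing from this data to the $\delta$-semiperfectness of $R$ requires lifting idempotents of $R/\delta_r$ to $R$. My plan, following the strategy of the Remark after Theorem~\ref{t3}, is to rerun the above construction with $R^{(\mathbb{N})}\to(R/\delta_r)^{(\mathbb{N})}$ in place of $R\to R/\delta_r$; combining Proposition~\ref{t4} with Lemma~\ref{l1} then yields $\delta(R^{(\mathbb{N})})\ll_\delta R^{(\mathbb{N})}$, so $J(R/S_r)$ is right $T$-nilpotent by Theorem~\ref{t1} and \cite[Lemma 1.3]{YZ} lifts idempotents modulo $\delta_r$; the conclusion then comes from \cite[Theorem 3.6]{zhou}. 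The main obstacle is producing a flat $\delta$-cover of the countable sum $(R/\delta_r)^{(\mathbb{N})}$ from the hypothesis that each simple summand has one, since Lemma~\ref{b1} only addresses finite direct sums, so I will have to verify directly, either by a direct-limit argument on the flat $\delta$-covers of the finite truncations or by inspecting the external sum component by component, that the resulting kernel is $\delta$-small.
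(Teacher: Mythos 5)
Your arguments for $(1)\Rightarrow(2)\Rightarrow(3)$, $(1)\Rightarrow(4)$ and $(3)\Rightarrow(1)$ are correct and essentially identical to the paper's. The problem is $(4)\Rightarrow(1)$: the ``obstacle'' you flag at the end is not a verification you have merely postponed but a step that is false in general, and the strategy behind it proves too much. If you could produce a flat $\delta$-cover of $(R/\delta_r)^{(\mathbb{N})}$, then Theorem~\ref{t2} applied to the natural map $R^{(\mathbb{N})}\rightarrow (R/\delta_r)^{(\mathbb{N})}$ (whose kernel is $R^{(\mathbb{N})}\delta_r=\delta(R^{(\mathbb{N})})$ by Proposition~\ref{p1}) would give $\delta(R^{(\mathbb{N})})\ll_{\delta}R^{(\mathbb{N})}$, hence by Theorem~\ref{t1} that $J(R/S_r)$ is right $T$-nilpotent; combined with $R/\delta_r$ semisimple this would make $R$ right $\delta$-\emph{perfect}, not merely $\delta$-semiperfect, and statement (4) cannot imply that. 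Concretely, let $R=\mathbb{Z}_{(p)}$: it is local, hence semiperfect, hence $\delta$-semiperfect, so (4) holds; but $S_r=0$ and $J(R/S_r)=p\mathbb{Z}_{(p)}$ is not right $T$-nilpotent. Every simple module over this ring has a projective (so flat) $\delta$-cover, yet $(R/\delta_r)^{(\mathbb{N})}\cong(\mathbb{Z}/p\mathbb{Z})^{(\mathbb{N})}$ has no flat $\delta$-cover at all, by the argument just given. The failure of Lemma~\ref{b1} for infinite direct sums is exactly the gap between $\delta$-semiperfect and $\delta$-perfect --- compare Theorem~\ref{tp}(6), where flat $\delta$-covers of \emph{all} semisimple modules are assumed precisely so that your argument can be run.

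The repair is to never leave the finite sum. The paper keeps the flat $\delta$-cover $f=\oplus_{i=1}^{n}f_i:F=\oplus_{i=1}^{n}F_i\rightarrow R/\delta_r$ that you already built, lifts the natural epimorphism $\pi:R\rightarrow R/\delta_r$ through $f$ to some $g:R\rightarrow F$, and uses Lemma~\ref{lz} to write $F=Y\oplus Im(g)$ with $Y$ a projective semisimple submodule of $Ker(f)$, so that $g\oplus id_Y:R\oplus Y\rightarrow F$ is a projective $\delta$-cover of the flat module $F$ itself. Taking each $F_i$ cyclic (Lemma~\ref{b5}), $F_i$ is a finitely generated pure submodule of $F$, so Proposition~\ref{p4} gives $F_i$ a projective $\delta$-cover, and Lemma~\ref{l1} composes it with $f_i$ into a projective $\delta$-cover of $S_i$. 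Since every simple module is either singular (hence isomorphic to some $S_i$) or projective, Zhou's criterion that every simple module have a projective $\delta$-cover is verified directly, with no idempotent lifting and no $T$-nilpotency needed.
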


\begin{proof} The implications $(1)\Rightarrow (2)\Rightarrow (3)$ and
$(1)\Rightarrow (4)$ are obvious.

$(3)\Rightarrow (1)$ Let $S$ be a simple right $R$-module and
$f:P\rightarrow S$ be a finitely projective $\delta$-cover. Since
$S$ is cyclic, by Lemma~\ref{b5}, there exists a cyclic direct
summand $P'$ of $P$ such that $f|_{P'}$ is a finitely projective
$\delta$-cover of $S$. Then $P'$ is projective. Hence, $S$ has a
projective $\delta$-cover. Thus, $R$ is $\delta$-semiperfect.

$(4)\Rightarrow (1)$ By \cite[Theorem 1.8]{zhou}, we can
consider $R/\delta_r=\oplus_{i=1}^n S_i$, where $S_i$ is simple
singular $R$-module for each $i=1,\ldots,n$. It is enough to show
that each simple singular $R$-module has a projective
$\delta$-cover in order to prove that $R$ is $\delta$-semiperfect.
Let $M$ be a simple singular $R$-module. Then $M$ is isomorphic to
one of $S_i$'s. By hypothesis, each $S_i$ has a flat
$\delta$-cover. Let $f_i:F_i\rightarrow S_i$ be the flat
$\delta$-cover of $S_i$. Then $f=\oplus_{i=1}^n
f_i:F=\oplus_{i=1}^n F_i \rightarrow R/\delta_r$ is a flat
$\delta$-cover of $R/\delta_r$ by Lemma~\ref{b1}. Since $R$ is
projective, there exists $g:R\rightarrow F$ such that the below
diagram is commutative, where $\pi:R\rightarrow R/\delta_r$ is the
natural epimorphism.

\[\begin{diagram}
\node[2]{R}\arrow{sw,t,..}{g}\arrow{s,r} {\pi} \\
\node{F}\arrow{e,t}{f}\node{R/\delta_r}\arrow{e}\node{0}
\end{diagram}\]

Then $F=Ker(f)+Im(g)$. But $Ker(f)\ll_{\delta} F$ so that
$F=Y\oplus Im(g)$, where $Y$ is a projective semisimple submodule
of $Ker(f)$. Since $Ker(g)\subseteq Ker(\pi)=\delta_r$,
$g:R\rightarrow Im(g)$ is a projective $\delta$-cover of $Im(g)$.
Hence, $g\oplus id_Y:P=R\oplus Y\rightarrow F$ is a projective
$\delta$-cover of $F$. By Lemma~\ref{b5}, we can assume that each
$F_i$ is cyclic. It follows from Proposition~\ref{p4} that each
$F_i$ has a projective $\delta$-cover. Thus, $S_i\cong M$ has a
projective $\delta$-cover.
\end{proof}

Recall that an $M$-projective module $M$ is {\em quasi-projective}
(see \cite{AF}) and that a module $M$ is called {\em
direct-projective} if, for every direct summand $X$ of $M$, every
epimorphism $M\rightarrow X$ splits (see \cite{H}). Note that a
quasi-projective module is direct-projective.

We need the following lemma in
order to prove the next result.

\begin{lem}\label{h} \cite{H} Let $P$ be projective and $P\oplus M$ direct projective. If there is an
epimorphism $f:P\rightarrow M$, then $M$ is projective.
\end{lem}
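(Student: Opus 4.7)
The plan is to produce a splitting of the given epimorphism $f:P\to M$. Once $f$ splits, $M$ is a direct summand of $P$; since $P$ is projective, $M$ is projective too. So the only real task is to obtain a section $M\to P$ of $f$.

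The key observation is that $M$ is a direct summand of $N:=P\oplus M$, so the direct-projectivity of $N$ tells us that every epimorphism $N\to M$ splits. The trick is to pick the right epimorphism. First I would try the projection $(p,m)\mapsto m$, but that splits tautologically and gives no information about $f$; next $(p,m)\mapsto f(p)+m$, but a section of this map only yields maps $s_1:M\to P$ and $s_2:M\to M$ with $f\circ s_1+s_2=\mathrm{id}_M$, which does not force $f$ to split. The map that actually works is
\[
 h:N\longrightarrow M,\qquad h(p,m)=f(p),
\]
which is surjective because $f$ is surjective, and which is engineered so that the $M$-coordinate of any section is irrelevant.

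By the direct-projectivity of $N$, the epimorphism $h$ splits, so there exists $s:M\to N$ with $h\circ s=\mathrm{id}_M$. Writing $s(m)=(s_1(m),s_2(m))$, the identity $h(s(m))=f(s_1(m))=m$ shows that $s_1:M\to P$ is a section of $f$. Hence $f$ splits, $M$ is (isomorphic to) a direct summand of $P$, and therefore $M$ is projective.

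The main obstacle, such as it is, is just the initial choice of the epimorphism $N\to M$: the naive candidates waste the hypothesis, and one needs to exploit the fact that direct-projectivity gives a section of \emph{any} chosen epimorphism onto a summand, not just a particular canonical one. The projectivity of $P$ is used only at the final step to promote ``$M$ is a direct summand of $P$'' to ``$M$ is projective.''
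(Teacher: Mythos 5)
Your proof is correct. Note that the paper does not actually prove this lemma --- it only cites Hausen's paper [H] --- so there is no in-text argument to compare against; your argument is a clean, self-contained proof of the cited fact, and it is essentially the standard one. The choice of the epimorphism $h(p,m)=f(p)$ from $P\oplus M$ onto its summand $M$ is exactly the right move: direct-projectivity of $P\oplus M$ splits $h$, the first coordinate of the section is a section of $f$, hence $M$ is a direct summand of the projective module $P$ and therefore projective. Your closing remark is also accurate: projectivity of $P$ enters only in the last step, and the splitting of $f$ holds under the weaker hypothesis that $P\oplus M$ is direct-projective.
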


A $\delta$- cover $f:P\rightarrow M$ of a module $M$ is said to be
a {\em quasi-projective  (direct-projective) $\delta$-cover} in
case $P$ is a quasi-projective (direct-projective) module.

\begin{thm}\label{dp} If every right $R$-module has a direct-projective $\delta$-cover,
then every right $R$-module has a projective $\delta$-cover.
\end{thm}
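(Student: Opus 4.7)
The plan is as follows. Given an arbitrary right $R$-module $M$, choose a free module $F$ together with an epimorphism $\pi\colon F\to M$, and apply the hypothesis to the auxiliary module $F\oplus M$ rather than to $M$ itself: let $\phi\colon D\to F\oplus M$ be a direct-projective $\delta$-cover. The idea is that $D$ will simultaneously carry a projective summand coming from $F$ and a summand covering $M$, which is exactly the data needed to invoke Lemma~\ref{h}.

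The first step is to split off the $F$-part. Since $F$ is projective and sits as a direct summand of $F\oplus M$, the inclusion $F\hookrightarrow F\oplus M$ lifts through $\phi$ to a map $\iota\colon F\to D$; composing with the projection $F\oplus M\to F$ exhibits $\iota$ as a split monomorphism, so $D=\iota(F)\oplus K$ for a suitable complement $K$. A short check shows that $\phi$ carries $K$ into the $M$-summand and that the restriction $\phi|_K\colon K\to M$ is epic. Its kernel is $\ker(\phi)\cap K$, which is $\delta$-small in $K$ by Lemma~\ref{b3}, and $K$ is direct-projective as a direct summand of the direct-projective module $D$. So $\phi|_K\colon K\to M$ is already a direct-projective $\delta$-cover of $M$; the real task is to upgrade it to a projective one.

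To do so, use the projectivity of $F$ to lift $\pi$ through the epimorphism $\phi|_K$ to a map $\lambda\colon F\to K$. Then $\lambda(F)+\ker(\phi|_K)=K$, and Lemma~\ref{lz} yields a decomposition $K=\lambda(F)\oplus Y$ with $Y$ a projective semisimple submodule of $\ker(\phi|_K)$. Consequently $D=\iota(F)\oplus\lambda(F)\oplus Y$, which displays $\iota(F)\oplus\lambda(F)\cong F\oplus\lambda(F)$ as a direct summand of the direct-projective module $D$, hence as a direct-projective module itself. Now Lemma~\ref{h} applies to the epimorphism $\lambda\colon F\to\lambda(F)$, with the projective module $F$ and the direct-projective module $F\oplus\lambda(F)$, to conclude that $\lambda(F)$ is projective. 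Since $Y$ is projective semisimple, $K=\lambda(F)\oplus Y$ is then projective, so $\phi|_K\colon K\to M$ is the desired projective $\delta$-cover of $M$.

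The main obstacle is realizing that the hypothesis should be applied not to $M$ itself but to the auxiliary module $F\oplus M$: applied directly to $M$ it only yields a direct-projective $\delta$-cover with no visible extra structure that would force projectivity. Packaging $F$ alongside $M$ is precisely what plants a projective summand $\iota(F)$ next to a summand $K$ covering $M$ inside a single direct-projective module $D$, creating the configuration required by Lemma~\ref{h}.
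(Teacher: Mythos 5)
Your proof is correct and follows essentially the same route as the paper's: apply the hypothesis to the auxiliary module $F\oplus M$, split off a copy of $F$ from the direct-projective cover, and invoke Lemma~\ref{h} on the summand $F\oplus\lambda(F)$ to conclude that the complement covering $M$ is projective. The only cosmetic difference is that you get the $\delta$-smallness of $\ker(\phi|_K)$ immediately from Lemma~\ref{b3}, whereas the paper verifies it by a direct computation with Lemma~\ref{lz}.
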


\begin{proof} Let $M$ be a module and consider the epimorphism $f:F\rightarrow M$,
where $F$ is free. By assumption, $F\oplus M$ has a direct-projective $\delta$-cover.
Let $g:P\rightarrow F\oplus M$ be the direct-projective $\delta$-cover of $F\oplus M$.
Consider the canonical projection $\pi:F\oplus M\rightarrow F$. Since $F$ is projective,
we have a monomorphism $h:F\rightarrow P$ which makes the following diagram commutative.

\[\begin{diagram}
\node[2]{F}\arrow{sw,t,..}{h}\arrow{s,r} {id_F} \\
\node{P}\arrow{e,t}{\pi g}\node{F}\arrow{e}\node{0}
\end{diagram}\]

So $P=F\oplus T$, where $T=Ker(\pi g)$. Now we claim that
$\overline{g}=g|_T:T\rightarrow M$ is a projective $\delta$-cover
of $M$. To show that $Ker(\overline{g})\ll_{\delta} T$, let
$T=Ker(\overline{g})+N$, where $N\leq T$. Since $P=F\oplus
T=F+Ker(\overline{g})+N$ and $Ker(\overline{g})\subseteq
Ker(g)\ll_{\delta} P$, $P=F\oplus N\oplus Y$ for a projective
semisimple submodule $Y$ of $Ker(\overline{g})$. The equality
$P=F\oplus T=F\oplus N\oplus Y$ gives that $T=N\oplus Y$ and so by
Lemma~\ref{lz}, $Ker(\overline{g})\ll_{\delta} T$.

Now we will show that $T$ is projective. Again by projectivity of
$F$ we have the following commutative diagram.

\[\begin{diagram}
\node[2]{F}\arrow{sw,t,..}{f'}\arrow{s,r} f \\
\node{T}\arrow{e,t}{\overline{g}}\node{M}\arrow{e}\node{0}
\end{diagram}\]

Hence, $T=Ker(\overline {g})+ Im(f')$. Since $Ker(\overline{g})\ll_{\delta} T$, there exists a projective semisimple
submodule $Y$ of $Ker(\overline{g})$ such that $T=Y\oplus Im(f')$. Since
$F\oplus Im(f')\leq^{\oplus} P$, it is direct-projective. Then $Im(f')$ is projective by Lemma~\ref{h}.
It follows that $T$ is projective.
 \end{proof}

By a proof similar to that of Theorem~\ref{dp}, we can observe
that if every finitely generated right $R$-module has a direct
projective $\delta$-cover, then every finitely generated right
$R$-module has a projective $\delta$-cover. The next result is an
immediate consequence of this fact and Theorem~\ref{ts}.

\begin{cor} The following statements are equivalent for a ring $R$:

1) $R$ is $\delta$-semiperfect.

2) Every finitely generated right $R$-module has a quasi-projective
$\delta$-cover.

3) Every finitely generated right $R$-module has a direct-projective
$\delta$-cover.

4) Every finitely generated (cyclic) right $R$-module has a
locally projective $\delta$-cover.

5) Every finitely generated (cyclic) right $R$-module has a
finitely projective $\delta$-cover.

6) $R/\delta_r$ is semisimple and every finitely generated
(cyclic) right $R$-module has a flat $\delta$-cover.
\end{cor}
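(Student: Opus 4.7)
The plan is to prove the corollary as a quick consequence of Theorem~\ref{ts} and the finitely-generated analogue of Theorem~\ref{dp} that was observed in the paragraph preceding the corollary. The direction $(1)\Rightarrow(2),(3),(4),(5),(6)$ is immediate: if $R$ is $\delta$-semiperfect then every finitely generated module has a projective $\delta$-cover (by Theorem~\ref{ts} together with the standard fact that $\delta$-semiperfect rings admit projective $\delta$-covers of every finitely generated module), and a projective module is simultaneously quasi-projective, direct-projective, locally projective, finitely projective and flat; moreover $R/\delta_r$ is semisimple by Theorem~\ref{ts}, which also covers the extra requirement in~(6).

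For the reverse implications, my plan is to funnel each hypothesis through either the remark following Theorem~\ref{dp} or through Theorem~\ref{ts}. First, $(2)\Rightarrow(3)$ is the observation that a quasi-projective module is direct-projective, so a quasi-projective $\delta$-cover is automatically a direct-projective $\delta$-cover. Next, for $(3)\Rightarrow(1)$ I will invoke the finitely-generated analogue of Theorem~\ref{dp} noted explicitly in the paper: the proof of Theorem~\ref{dp} goes through word for word when $M$ is finitely generated (taking $F$ to be a finitely generated free cover of $M$ so that $F\oplus M$ is finitely generated), yielding that every finitely generated module, and in particular every simple module, has a projective $\delta$-cover; hence $R$ is $\delta$-semiperfect.

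Finally, for $(4)\Rightarrow(1)$, $(5)\Rightarrow(1)$ and $(6)\Rightarrow(1)$ the plan is simply to restrict each hypothesis to simple modules and quote Theorem~\ref{ts}: a simple module is finitely generated (indeed cyclic), so under $(4)$ every simple module has a locally projective $\delta$-cover, under $(5)$ every simple module has a finitely projective $\delta$-cover, and under $(6)$ every simple module has a flat $\delta$-cover while $R/\delta_r$ is semisimple; each of these is exactly one of the equivalent conditions characterising $\delta$-semiperfectness in Theorem~\ref{ts}.

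The only non-bookkeeping step is verifying that the proof of Theorem~\ref{dp} really does restrict to the finitely generated setting, and this is the main (though minor) obstacle. The key point is that the direct-projective $\delta$-cover $g:P\to F\oplus M$ decomposes as $P=F\oplus T$ where $T=\mathrm{Ker}(\pi g)$, and since $F\oplus M$ is finitely generated one can arrange (using Lemma~\ref{b5}) that $P$, and hence $T$, is finitely generated; the splitting argument using Lemma~\ref{h} and $\mathrm{Ker}(\bar g)\ll_\delta T$ then goes through unchanged to produce a projective $\delta$-cover of $M$.
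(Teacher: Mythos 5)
Your proposal is correct and matches the paper's own (very brief) argument: the paper likewise derives the corollary from Theorem~\ref{ts} together with the explicitly stated finitely generated analogue of Theorem~\ref{dp}, the reverse implications being obtained by restricting to simple (hence cyclic) modules. Your extra check that the proof of Theorem~\ref{dp} survives in the finitely generated setting is the right point to verify, though note that only the finite generation of $F\oplus M$ is actually needed there, not that of $P$.
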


Next, we will deal with $\delta$-perfect rings.

\begin{thm} Let $R$ be a ring such that $J(R/S_r)$ is right $T$-nilpotent.
Then the following statements are equivalent:

1) $R$ is right $\delta$-perfect.

2) Every semisimple right $R$-module has a generalized locally projective
$\delta$-cover.

3) Every semisimple right $R$-module has a generalized projective
$\delta$-cover.
\end{thm}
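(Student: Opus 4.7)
The plan is to establish the cycle $(1)\Rightarrow(3)\Rightarrow(2)\Rightarrow(1)$. The first two implications are immediate from the definitions: if $f\colon P\rightarrow M$ is a projective $\delta$-cover, then $Ker(f)\ll_{\delta} P\subseteq \delta(P)$, so $f$ is a generalized projective $\delta$-cover, giving $(1)\Rightarrow(3)$; and since every projective module is locally projective, $(3)\Rightarrow(2)$ is trivial.

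The substance lies in $(2)\Rightarrow(1)$. The first step is to arrange the hypotheses needed to invoke Theorem~\ref{s1}. Because $J(R/S_r)$ is right $T$-nilpotent, $J(R/S_r)=\delta_r/S_r$ is in particular nil, so idempotents lift modulo $J(R/S_r)$ in $R/S_r$. Appealing to \cite[Lemma 1.3]{YZ}, as in the proof of Theorem~\ref{t3}, idempotents of $R$ lift modulo $\delta_r$, which is exactly the standing hypothesis Theorem~\ref{s1} requires.

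Since (2) produces a generalized locally projective $\delta$-cover for every semisimple right $R$-module, in particular every simple module admits one. Theorem~\ref{s1} then yields that $R$ is $\delta$-semiperfect, so that $R/\delta_r$ is semisimple. Combining the semisimplicity of $R/\delta_r$ with the right $T$-nilpotency of $J(R/S_r)$, the characterization of right $\delta$-perfect rings in \cite[Theorem 3.8]{zhou} delivers that $R$ is right $\delta$-perfect, closing the cycle.

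There is no substantive obstacle here: the argument simply assembles Theorem~\ref{s1} with Zhou's characterization of right $\delta$-perfectness. The only point worth verifying is that the standing assumption on $J(R/S_r)$ does double duty, supplying both the lifting of idempotents modulo $\delta_r$ needed to activate Theorem~\ref{s1} and the second half of the criterion in \cite[Theorem 3.8]{zhou}.
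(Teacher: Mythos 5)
Your proof is correct and takes essentially the same route as the paper, whose own proof simply refers back to the argument of Theorem~\ref{s1} together with the cited results of Zhou and Yousif--Zhou. Your explicit remark that the right $T$-nilpotency of $J(R/S_r)$ forces it to be nil, hence supplies the idempotent-lifting hypothesis needed to invoke Theorem~\ref{s1}, and then combines with \cite[Theorem 3.8]{zhou} to upgrade $\delta$-semiperfect to $\delta$-perfect, is precisely the content the paper leaves implicit.
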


\begin{proof} The equivalency $(1)\Leftrightarrow (3)$ follows from \cite[Lemma 4.3]{ANO}, \cite[Theorem 3.6]{zhou}
and \cite[Lemma 1.3]{YZ},
and the proof of $(1)\Leftrightarrow (2)$ is similar to that of
$(1)\Leftrightarrow (2)$ in Theorem~\ref{s1}.
\end{proof}

We conclude this section with the following theorem which states
some equivalent conditions for a ring to be $\delta$-perfect.
\begin{thm}\label{tp} The following statements are equivalent for a ring $R$:

1) $R$ is right $\delta$-perfect.

2) Every right $R$-module has a quasi-projective $\delta$-cover.

3) Every right $R$-module has a direct-projective $\delta$-cover.

4) Every semisimple right $R$-module has a locally projective
$\delta$-cover.

5) Every semisimple right $R$-module has a finitely projective
$\delta$-cover.

6) $R/\delta_r$ is semisimple and every semisimple right
$R$-module has a flat $\delta$-cover.
\end{thm}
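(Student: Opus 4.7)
\noindent\textbf{Proof plan for Theorem~\ref{tp}.}
The plan is to establish the circle of implications $(1)\Rightarrow(2)\Rightarrow(3)\Rightarrow(1)$ and $(1)\Rightarrow(4)\Rightarrow(5)\Rightarrow(6)\Rightarrow(1)$, mirroring the structure of Theorem~\ref{ts}. All implications from $(1)$ are immediate from the chain of implications \emph{projective $\Rightarrow$ quasi-projective $\Rightarrow$ direct-projective} and \emph{projective $\Rightarrow$ locally projective $\Rightarrow$ finitely projective $\Rightarrow$ flat}, combined with the fact that if $R$ is right $\delta$-perfect then $R/\delta_r$ is semisimple (from Zhou's characterisation of $\delta$-perfect rings). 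The step $(2)\Rightarrow(3)$ is trivial, and $(3)\Rightarrow(1)$ is precisely Theorem~\ref{dp}. The step $(4)\Rightarrow(5)$ is trivial.

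For $(5)\Rightarrow(6)$: every simple module is semisimple, so hypothesis $(5)$ implies each simple module has a finitely projective $\delta$-cover, hence by Theorem~\ref{ts} $R$ is $\delta$-semiperfect, and in particular $R/\delta_r$ is semisimple. Since finitely projective modules are flat, $(6)$ follows.

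The main content is $(6)\Rightarrow(1)$, and this is where I expect the real work to lie. Since $R/\delta_r$ is semisimple, Theorem~\ref{ts} gives that $R$ is $\delta$-semiperfect; in particular idempotents lift modulo $\delta_r$. By Zhou's characterisation of right $\delta$-perfect rings, it remains to show that $J(R/S_r)$ is right $T$-nilpotent. By Theorem~\ref{t1}, it suffices to verify that $\delta(F)\ll_\delta F$ for every countably generated free right $R$-module $F$. For such an $F$, Proposition~\ref{p1} gives $\delta(F)=F\delta_r$ (since $F$ is projective, hence locally projective), so $F/\delta(F)\cong F\otimes_R R/\delta_r$ is a direct sum of copies of the semisimple module $R/\delta_r$, hence semisimple. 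By hypothesis $(6)$, $F/\delta(F)$ has a flat $\delta$-cover, while the natural epimorphism $\pi:F\to F/\delta(F)$ is a generalized projective $\delta$-cover (with kernel contained in $\delta(F)$). Theorem~\ref{t2} then upgrades $\pi$ to a projective $\delta$-cover, so $\delta(F)=\operatorname{Ker}(\pi)\ll_\delta F$, as required.

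The main obstacle is recognising that hypothesis $(6)$ is strong enough to feed Theorem~\ref{t2}: one must notice that semisimplicity of $R/\delta_r$ makes the quotient $F/F\delta_r$ semisimple (rather than just an arbitrary quotient), thereby placing it in the class of modules for which $(6)$ supplies a flat $\delta$-cover. Once that observation is in place, Theorem~\ref{t2}, Proposition~\ref{p1}, and Theorem~\ref{t1} fit together cleanly, and the remaining implications are mostly bookkeeping.
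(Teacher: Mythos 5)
Your proposal is correct and follows essentially the same route as the paper: the easy implications are handled identically, $(3)\Rightarrow(1)$ is Theorem~\ref{dp}, $(5)\Rightarrow(6)$ goes through Theorem~\ref{ts}, and $(6)\Rightarrow(1)$ reduces via Zhou's characterization and Theorem~\ref{t1} to showing $\delta(F)\ll_\delta F$ for countably generated free $F$, which the paper also obtains by noting $F/\delta(F)$ is semisimple and then applying the argument of Theorem~\ref{t3} (i.e.\ Theorem~\ref{t2} upgrading the natural epimorphism to a projective $\delta$-cover). No gaps.
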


\begin{proof} The implications $(1)\Rightarrow (2)\Rightarrow (3)$ and
$(1)\Rightarrow (4)\Rightarrow (5)$ are obvious.

$(3)\Rightarrow (1)$ It follows from Theorem~\ref{dp}.

$(5)\Rightarrow (6)$ By Theorem~\ref{ts}, $R$ is
$\delta$-semiperfect. Every semisimple right $R$-module has a flat
$\delta$-cover since finitely projective modules are flat.

$(6)\Rightarrow (1)$ By \cite[Theorem 3.8]{zhou}, we only need to
prove that $J(R/S_r)$ is right $T$-nilpotent. Since $R/\delta_r$
is semisimple, $F/\delta(F)$ is a semisimple $R$-module for a
countably generated free module $F$ and so $F/\delta(F)$ has a
flat $\delta$-cover by assumption. Hence, the rest of the proof is
similar to that of Theorem~\ref{t3}.
\end{proof}

The condition that $R/\delta_r$ is semisimple in Theorems~\ref{ts}
and~\ref{tp} is not superfluous because of Example~\ref{e1}.

\vspace{0.3in}

\noindent {\it Acknowledgments}. This work was supported by
The Scientific Technological Research Council of
Turkey (T\"UB\.ITAK). The author would like to thank her supervisor
Prof. A. \c {C}i\u {g}dem \"{O}zcan for her advice and support throughout. This work was completed during the author's visit to Center of
Ring Theory and its Applications, Ohio University in $2010$.

\end{document}